\newcommand{\COLORON}{0}
\newcommand{\NOTESON}{0}
\newcommand{\Debug}{0}
\newcommand{\comment}[1]{}
\newcommand{\COMMENT}[1]{}
\definecolor{darkgray}{rgb}{0.3,0.3,0.3}
\newcommand{\defi}[1]{{\color{darkgray}\emph{#1}}}
\newtheorem{proposition}{Proposition}[section]
\newtheorem{theorem}[proposition]{Theorem}
\newtheorem{corollary}[proposition]{Corollary}
\newtheorem{lemma}[proposition]{Lemma}
\newtheorem{conjecture}{{Conjecture}}[section]
\newtheorem{problem}[conjecture]{{Problem}}
\newtheorem{question}[conjecture]{{Question}}
\newtheorem{examp}[proposition]{Example}
\theoremstyle{definition}
\newtheorem{rem}[proposition]{Remark}
\newcommand{\FIG}{0}
\newcommand{\note}[1]{ 

\hspace*{-30pt}
	{\color{blue}  NOTE: \color{Turquoise}{\small  \tt \begin{minipage}[c]{1.1\textwidth}  #1 \end{minipage} \ignorespacesafterend }} 
	
	}
\else \newcommand{\note}[1]{} \fi
\newcommand{\afsubm}[1]{ \ifnum \Debug = 1 {\mymargin{#1}}
\fi} 
\renewcommand{\color}[1]{}
\newcommand{\N}{\ensuremath{\mathbb N}}
\newcommand{\R}{\ensuremath{\mathbb R}}
\newcommand{\Z}{\ensuremath{\mathbb Z}}
\newcommand{\cv}{\ensuremath{\mathcal V}}
\newcommand{\sm}{\backslash}
\DeclareRobustCommand{\cev}[1]{%
  \mathpalette\do@cev{#1}%
}
\newcommand{\do@cev}[2]{%
  \fix@cev{#1}{+}%
  \reflectbox{$\m@th#1\vec{\reflectbox{$\fix@cev{#1}{-}\m@th#1#2\fix@cev{#1}{+}$}}$}%
  \fix@cev{#1}{-}%
}
\newcommand{\fix@cev}[2]{%
  \ifx#1\displaystyle
    \mkern#23mu
  \else
    \ifx#1\textstyle
      \mkern#23mu
    \else
      \ifx#1\scriptstyle
        \mkern#22mu
      \else
        \mkern#22mu
      \fi
    \fi
  \fi
}
\newcommand{\nin}{\ensuremath{{n\in\N}}}
\newcommand{\pth}[2]{\ensuremath{#1}\text{--}\ensuremath{#2}~path}
\newcommand{\seq}[1]{\ensuremath{(#1_n)_{n\in\N}}} 
\newcommand{\g}{\ensuremath{G\ }}
\newcommand{\G}{\ensuremath{G}}
\newcommand{\Cg}{Cayley graph}
\newcommand{\vt}{vertex-transitive}
\newcommand{\Lr}[1]{Lemma~\ref{#1}}
\newcommand{\Tr}[1]{Theorem~\ref{#1}}
\newcommand{\Sr}[1]{Section~\ref{#1}}
\newcommand{\Prr}[1]{Pro\-position~\ref{#1}}
\newcommand{\Prb}[1]{Problem~\ref{#1}}
\newcommand{\Cr}[1]{Corollary~\ref{#1}}
\newcommand{\Cnr}[1]{Con\-jecture~\ref{#1}}
\newcommand{\lf}{locally finite}
\renewcommand{\iff}{if and only if}
\newcommand{\fe}{for every}
\newcommand{\st}{such that}
\newcommand{\ti}{there is}
\newcommand{\obda}{without loss of generality}
\newcommand{\mymargin}[1]{
 \ifnum \Debug = 1
  \marginpar{%
    \begin{minipage}{\marginparwidth}\small%
      \begin{flushleft}%
        {\color{blue}#1}%
      \end{flushleft}%
   \end{minipage}%
  }%
 \fi
}%
\newcommand{\mySection}[2]{}
\newcommand{\shm}{\ensuremath{<_{\rm{sh}}}}
\newcommand{\divm}{\ensuremath{<_{\rm{div}}}}
\newcommand{\coem}{\ensuremath{<_{\rm{c}}}}
\newcommand{\Aut}{\mathrm{Aut}}
\def\td{tree-decom\-po\-si\-tion}
\def\qt{quasi-tran\-si\-tive}
\def\qi{quasi-iso\-metric}
\def\lf{locally finite}
\begin{document}

\title{A full Halin grid theorem}

\author[1]{Agelos Georgakopoulos\thanks{Supported by  EPSRC grants EP/V048821/1, EP/V009044/1, and EP/Y004302/1.}}
\affil[1]{  {Mathematics Institute}\\
 {University of Warwick}\\
  {CV4 7AL, UK}}
\author[2]{Matthias Hamann}
\affil[2]{  {Department of Mathematics}\\
 {University of Hamburg}\\
  {Germany}}

\date{\today}
\maketitle

\begin{abstract}
Halin's well-known grid theorem states that a graph $G$ with a thick end must contain a subdivision of the hexagonal half-grid. We obtain the following strengthening when $G$ is vertex-transitive and locally finite. Either $G$ is quasi-isometric to a tree (and therefore has no thick end), or it contains a subdivision of the full hexagonal grid.
\end{abstract}
\maketitle

\section{Introduction}

Let $\mathbb{H}$ denote the hexagonal lattice (aka.\ honeycomb lattice). Halin's grid theorem asserts that if a  graph \g contains an infinite family of 1-way infinite paths no two of which can be separated by a finite vertex-set, then \g contains a subdivision of the \defi{half-grid}, i.e.\ the intersection of $\mathbb{H}$ with the upper half-space. The aim of this paper is to show that if \g is \defi{\qt}, i.e.\ it has finitely many orbits of vertices under the action of its automorphism group, then we can improve Halin's theorem to obtain a subdivision of the full grid $\mathbb{H}$. Combined with known results about \qt\ graphs, our result can be summarized as

\begin{theorem} \label{full Halin VT general intro} 
Let \g be a \lf, \qt\ graph that is not \qi\ to a tree. Then \g contains a subdivision of $\mathbb{H}$.
\end{theorem}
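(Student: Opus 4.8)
The plan is to combine the structure theory of \qt\ graphs with Halin's theorem and a compactness argument: Halin produces a half-grid, deep inside which one sees arbitrarily large patches of the \emph{full} grid, and the homogeneity of \G\ lets us slide these patches on top of one another until a copy of all of $\mathbb H$ emerges in the limit. Concretely, one may assume \G\ connected, and the first ingredient is a thick end. Here I would quote the known structure theory of \lf\ \qt\ graphs -- their accessibility and the resulting \qt\ tree-amalgamation decomposition into parts with at most one end -- to conclude that a \lf\ \qt\ graph that is not \qi\ to a tree has a \emph{thick} end \oo, i.e.\ an end containing infinitely many pairwise disjoint rays. (If every end of \G\ were thin, then \G\ would be \qi\ to a tree; this is the appeal to ``known results about \qt\ graphs'' promised in the abstract.)

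Next, since the infinitely many pairwise disjoint rays belonging to \oo\ cannot be separated from one another by any finite vertex set, Halin's grid theorem yields a subdivision $H\subseteq\G$ of the hexagonal half-grid all of whose vertical rays converge to \oo; let $D$ be its boundary double ray. Because the hexagonal half-grid is itself not \qt\ (no automorphism can move a boundary vertex to an interior one), $H$ is a proper subgraph of \G. The feature of $H$ that the argument exploits is that deep inside $H$ one already sees full grids: for each $r\in\N$ there is a branch vertex $v_r$ of $H$ whose grid-ball of radius $r$ in $H$ avoids $D$, and the corresponding portion $P_r$ of $H$ is a subdivision of the radius-$r$ ball $B_r$ of $\mathbb H$, centred at $v_r$.

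Now I would recentre these patches using the quasi-transitivity of \G. The vertices $v_r$, $r\in\N$, lie in only finitely many orbits of $\Aut(G)$, so a single orbit contains $v_{r_1},v_{r_2},\dots$ for some $r_1<r_2<\cdots$; fixing $g_i\in\Aut(G)$ with $g_i(v_{r_1})=v_{r_i}$, the subgraph $F_i:=g_i^{-1}(P_{r_i})$ is again a subdivision of $B_{r_i}$, but now its central branch vertex is the single vertex $v:=v_{r_1}$, independent of $i$. So \G\ contains subdivisions of arbitrarily large balls of $\mathbb H$, all centred at the one vertex $v$.

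Finally, one extracts from the family $(F_i)$ a single subgraph of \G\ that is a subdivision of all of $\mathbb H$. Each $F_i$ is the union of the finitely many branch paths realising the edges of $B_{r_i}$, all emanating from $v$; along a suitable subsequence these branch paths converge, and the limiting branch paths and rays assemble into the sought subdivision of $\mathbb H$. I expect this last step to be the main obstacle. Because Halin's half-grid may have branch paths of unbounded length, a naive application of K\"onig's infinity lemma to finite balls of \G\ does not suffice; instead one has to carry out the extraction inside the Freudenthal compactification $|G|$, which is compact, using a limiting lemma for topological paths of the kind recalled above, and -- the genuine crux, as in all infinite-grid arguments -- one must ensure that the limiting branch paths and rays are pairwise disjoint except at their shared ends, so that together they span an honest subdivision of $\mathbb H$ rather than some degenerate quotient of it. The other, external, input on which the whole argument rests is the structure theory of \qt\ graphs invoked in the first step.
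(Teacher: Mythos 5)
You have correctly identified both a valid entry point (not \qi\ to a tree $\Rightarrow$ thick end, via the structure theory of accessible \qt\ graphs) and the genuine crux of your approach. Unfortunately, the gap you flag is not a technicality that the Freudenthal machinery repairs; it is a real obstruction, and your argument does not close it.

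The problem with the recentring-and-limit step is that the family $(F_i)$ has no uniform geometric control in \g. Halin's theorem places no constraint on the shape of the half-grid subdivision inside \G, so the patch $P_{r_i}$ can be an arbitrarily distorted subgraph, and the automorphism $g_i^{-1}$ only pins its centre down to $v$. In particular there is no bound, depending on $k$ but not on $i$, on $d_G\bigl(v,\,w\bigr)$ for a branch vertex $w$ of $F_i$ at grid-distance $k$ from $v$. Consequently a subsequential limit of the branch paths from $v$ to its three grid-neighbours may degenerate into rays to an end rather than finite paths, and even when the branch vertices do stabilise, two branch paths that are disjoint in every $F_i$ can converge to a common limit path. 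The limiting lemma you invoke (of the kind recalled in the preamble) controls a \emph{single} topological path at a time and preserves simplicity, but gives no simultaneous disjointness for an infinite family, and passing to $|G|$ only guarantees that subsequences converge somewhere in the compactification, not that they converge to a subdivision sitting inside \g itself. So as written, the extraction step can produce a degenerate quotient of $\mathbb H$, exactly the failure you worried about, and I don't see a way to exclude it with the ingredients you have.

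The paper avoids this entirely by never trying to take a limit of Halin patches. Instead it reduces to the planar case: a \qt\ \lf\ graph with no $\mathbb H$-subdivision has no $K_\infty$ minor, so (by Esperet--Giocanti--Legrand-Duchesne) it carries an $\Aut(G)$-invariant tree-decomposition of adhesion at most $3$ whose torsos are \qt\ and either planar or of bounded treewidth; a thick end must live in a planar torso. In the planar $1$-ended \qt\ case, one gets bounded degree and (by Kr\"on) bounded co-degree, and then the full grid is built \emph{directly} and explicitly: start from a diverging pair of rays (Lemma~\ref{diverging rays}), which gives a double ray $R$, and grow double rays outward on both sides of $R$ one layer at a time using face-boundaries and the comb lemma, the bounded co-degree guaranteeing each new layer is at bounded distance from the previous one. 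The planarity and bounded co-degree supply precisely the uniform geometric control that your compactness argument lacks. If you want to salvage your approach, you would at minimum need some coarse or diverging control on the Halin half-grid (cf.\ Problem~\ref{prob diverging H bd}), which Halin's theorem does not provide and which appears to be an open problem in its own right.
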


When \g is a \Cg\ of a group $\Gamma$, it is well-known that it satisfies the first sentence of \Tr{full Halin VT general intro} \iff\ $\Gamma$ is not virtually-free (\cite{GhHaSur}). Another consequence is that every  \lf, \qt\ graph~$G$ that contains a subdivision of the half-grid must in fact contain a subdivision of the full grid $\mathbb{H}$, see Corollary~\ref{half grid means full grid}.

\medskip
Halin's theorem, as well as our \Tr{full Halin VT general intro}, seem too weak to have any group-theoretic consequences because the grids they provide can have arbitrary distortion compared to the geometry of the host graph \G. In \Sr{sec div} we propose a strengthening of these theorems where we require the (half or full) grid $H$ to respect some of the geometry of \G: we require that  for any two rays of $H$ that diverge, their images in \g also diverge (see \Sr{embs} for definitions). We prove that for finitely generated \Cg s the property of containing such a diverging subdivision of $\mathbb{H}$ is invariant under the choice of the generating set (\Prr{qi preserve div}), and pose related questions.

\medskip

A well-known conjecture of Thomas \cite{ThoWel} postulates that the countable graphs are well-quasi-ordered under the minor relation. Our results suggest that the restriction of Thomas' conjecture to vertex-transitive graphs may be within reach. Indeed, combining \Tr{full Halin VT general intro} with a theorem of Thomassen we will deduce that every \lf, 1-ended, vertex-transitive graph is a minor-twin\footnote{We call two graphs $G,H$ \defi{minor-twins}, if both $G<H$ and $H<G$ hold.} of either $\mathbb{H}$ or the cubic grid $\Z^3$ (\Prr{prop just two}). We discuss this in \Sr{sec wqo}. 

\medskip

A different recent characterisation of being quasi-isometric to a tree is as follows. A \lf\ \qt\ graph $G$ is \qi\ to a tree if and only if for every \lf\ \qt\ graph $H$ \qi\ to~$G$, there exists a finite graph that is not a minor of~$H$~\cite{H-minorExcl}. This generalizes a similar characterization  of virtually free groups by Khukhro~\cite{Khukhro2023}.
Combining this result with our \Tr{full Halin VT general intro} yields that a \lf\ \qt\ graph $G$ has a subdivision of  $\mathbb H$  if and only if \g is \qi\ to a \lf\ \qt\ graph $H$ which has each finite graph as a minor.

\section{Preliminaries} \label{prel}

\subsection{Graphs} 
We call a graph \g \emph{\lf} if every vertex has finite degree.
A \emph{ray} in~$G$ is a one-way infinite path and a \emph{double ray} is a two-way infinite path.
A \emph{comb} is a union of a ray~$R$ with infinitely many disjoint (finite) paths starting at~$R$ and otherwise disjoint from~$R$.
The end vertices of those paths are the \emph{teeth} of the comb.
Two rays in~$G$ are \emph{equivalent} if there are infinitely many pairwise disjoint paths between them.
This is an equivalence relation whose equivalence classes are the \emph{ends} of~$G$.
An end is \emph{thick} if it contains infinitely many pairwise disjoint rays.

We say that two rays $R,L$ in \g \defi{diverge}, if
\fe\ $n\in \N$ they have tails $R' \subseteq R, L' \subseteq L$ satisfying $d(R',L')>n$.

A \defi{plane graph} is a graph endowed with an embedding into the plane $\R^2$. 
A plane graph has \emph{bounded co-degree} if all its face-boundaries have bounded lengths.

\subsection{Embeddings} \label{embs}
 
Let $G$ and~$H$ be graphs and let $\gamma\geq 1$ and $c\geq 0$.
A \emph{$(\gamma,c)$-\qi\ embedding} of~$H$ into~$G$ is a map $f\colon V(H)\to V(G)$ such that the following holds for all $u,v\in V(H)$:
\[
\frac{1}{\gamma}d_H(u,v)-c\leq d_G(f(u),f(v))\leq\gamma d_H(u,v)+c.
\]
If additionally also $d_G(w, f(V(H)))\leq c$ holds for all $w\in V(G)$, then it is a \emph{$(\gamma,c)$-quasi-isometry}.
If the precise constants $\gamma$ and~$c$ do not matter, we simply drop them from the names.

A map $f\colon V(H)\to V(G)$ is a \emph{coarse embedding}, if there exists functions $\rho_-\colon [0,\infty)$ and $\rho_+\colon [0,\infty)$ such that $\rho_-(a)\to\infty$ for $a\to\infty$ and
\[
\rho_-(d_H(u,v))\leq d_G(f(u),f(v))\leq \rho_+(d_H(u,v))
\]
for all $u,v\in V(H)$.

More generally, we say that a map $f\colon X\to Y$ between metric spaces $(X,d_X), (Y,d_Y)$ is \emph{diverging}, if  \fe\ two sequences \seq{x},\seq{y}\ of points of $X$ \st\ $d_X(x_i,y_i) \to \infty$, we have $d_Y(f(x_i),f(y_i)) \to \infty$. We hereby allow \defi{extended metrics} that are allowed to take the value $d_X(x,y)=\infty$, for example when $X$ is a disconnected graph endowed with its \defi{graph distance}, i.e.\ $d_X(x,y)$ is the smallest length of a \pth{x}{y}, or $\infty$ if no such path exists.

A \defi{subdivision} of $H$ in $G$ is a topological embedding $f\colon H \to G$ of the corresponding 1-complexes such that $f(V(H))\subseteq V(G)$. In other words, $f$ maps each vertex of $H$ to one of \G, and it maps each edge $xy$ of $H$ bijectively to an \pth{x}{y} so that the pre-image of each point of $G$ is either empty, or a vertex of $H$, or a unique interior point of an edge of $H$.

Our notion of \defi{diverging subdivision} is obtained by combining the last two definitions.
 
\section{Proof of the main result}

In this section, we will prove our main theorem, Theorem~\ref{full Halin VT general}.
Its proof will be divided into three steps.
First, we will prove that every plane one-ended graph with bounded degree and bounded co-degree and a thick end contains  a subdivision of~$\mathbb H$ (Lemma~\ref{full H planar}).
Then we will prove that every \lf, \qt, plane graph with a thick end contains a subdivision of~$\mathbb H$ (Theorem~\ref{full H planar 2}). Finally, we extend this to the non-planar case and prove our main theorem.

\subsection{The planar 1-ended case}

In order to find a subdivision of~$\mathbb H$, we need a result from~\cite{GeoPapMin} that ensures the existence of a \defi{diverging pair of rays}, i.e.\ a diverging subdivision of the disjoint union of two rays in a graph \G:

\begin{lemma}[{\cite[Theorem 8.16]{GeoPapMin}}] \label{diverging rays}
Let \g be a bounded degree graph that has an infinite set of pairwise disjoint rays. Then $G$ contains a diverging pair of rays.
\end{lemma}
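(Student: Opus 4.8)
The plan is to split the argument according to how the given family of pairwise disjoint rays is distributed among the ends of $G$, using throughout that bounded degree makes every ball of $G$ finite.

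Suppose first that two of the rays, say $R$ and $L$, lie in \emph{different} ends of $G$. Then there is a finite set $F\subseteq V(G)$ such that a tail of $R$ and a tail of $L$ lie in distinct components $C_R\neq C_L$ of $G-F$. Given $n\in\N$, the set $B(F,n):=\{v:d(v,F)\le n\}$ is finite since $G$ has bounded degree, so each of $R,L$ meets it only finitely often; hence there is a $k$ such that the tails $R',L'$ obtained by deleting the first $k$ vertices of $R,L$ are disjoint from $B(F,n)$ and contained in $C_R$, $C_L$ respectively. Every $R'$--$L'$ path in $G$ meets $F$, so it has length at least $d(R',F)+d(F,L')>2n$. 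Thus $d(R',L')>n$, and as $n$ was arbitrary, $R$ and $L$ diverge. (This step uses only bounded degree, not that the family is infinite.)

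It remains to treat the case where all rays of the family lie in a single end $\omega$; since the family is infinite, $\omega$ is thick. Fix a root $r$ and put $B_k:=\{v:d(r,v)\le k\}$, $S_k:=\{v:d(r,v)=k\}$, all finite. Since consecutive vertices of a ray differ by at most $1$ in their distance from $r$, and each ray eventually leaves every ball, every ray of the family meets each sufficiently large sphere $S_k$, and it does so in vertices pairwise distinct from the other rays; hence $|S_k|\to\infty$. We need metric, not merely cardinality, spreading. Let $b(D)$ bound $|B(v,D)|$ uniformly in $v$. Fix $D$, choose $m>b(D)$, and take $k$ large enough that $R_1,\dots,R_m$ all meet $S_k$: among the $m$ distinct vertices they use in $S_k$, two must be at $G$-distance $>D$, for otherwise they would all lie inside one ball of radius $D$. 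As there are only finitely many pairs $i<j\le m$, some fixed pair $R_i,R_j$ is $>D$-apart across $S_k$ for infinitely many $k$. Doing this for every $D$ and diagonalising---here one exploits that the whole family $R_1,R_2,\dots$ is infinite, so that larger sub-families are available as $D$ grows---and organising the extraction by König's infinity lemma, one obtains a single pair $R,L$ of rays from the family whose crossing vertices in the spheres $S_k$ tend to infinite distance. One then checks that divergence along the spheres $S_k$ implies divergence in the sense of the statement; this is routine but needs a little care for rays that repeatedly double back towards $r$.

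The main obstacle is this last step: converting ``infinitely often far apart across $S_k$'' into ``eventually far apart'', i.e.\ a $\liminf$ rather than a $\limsup$ statement, since two disjoint rays in a thick end may approach one another again and again in a staggered fashion. This is precisely where one must combine bounded degree (to control ball sizes) with the infinitude of the family (to leave room for the diagonalisation). A softer route would invoke Halin's grid theorem and then try to straighten the resulting half-grid so that its rays diverge in $G$; but controlling such distortion is essentially the subject of the present paper, which is why it is cleanest here to quote \cite[Theorem 8.16]{GeoPapMin}.
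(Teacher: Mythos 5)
The paper does not supply a proof of this lemma: it quotes it verbatim as Theorem~8.16 of \cite{GeoPapMin}, and notes later (around \Cnr{conj rays}) that the proof there relies on a geometric version of Menger's theorem for pairs of paths. Your proposal ends in the same place, and quoting the result is the right call. But the direct argument you sketch beforehand does not close, and the gap is not merely cosmetic.

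Your Case~1 (two rays in different ends diverge) is fine, and needs only local finiteness. The substance is all in the one-ended case, and there your argument has two problems. First, for a fixed $D$ you obtain a pair $(R_i,R_j)$ with $i,j\leq m(D)$ that is $>D$-apart across infinitely many spheres, but as $D$ grows, $m(D)$ grows and the winning pair depends on~$D$; the candidate sets are not nested, so K\"onig's lemma as invoked does not obviously yield one fixed pair working for every~$D$. Second --- and this is the obstacle you yourself name --- even granting such a pair, ``far apart across $S_k$ for infinitely many $k$'' is a $\limsup$ statement, while divergence requires a $\liminf$: two disjoint rays in a thick end can be far apart on one sphere and at distance~$1$ on the next, infinitely often, and bounded degree places no obstruction to this. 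Turning $\limsup$-separation into genuine divergence is precisely the nontrivial content of~\cite[Theorem~8.16]{GeoPapMin}. So the proposal is honest about its limits, but the attempted direct proof, had you pressed it, would not have gone through without importing the coarse-Menger machinery that the citation encapsulates.
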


The bounded-degree condition in \Lr{diverging rays}  is necessary as shown by the following example. Let \g be the graph obtained from $\mathbb{H}$ by introducing, \fe\ $\nin$, a new vertex $v_n$ and joining $v_n$ by an edge to each vertex of $\mathbb{H}$ that is at distance $n$ from a fixed root. Notice that \g is quasi-isometric to an 1-way infinite path.

\begin{lemma} \label{full H planar}
Let \g be a plane graph with bounded degrees and co-degrees. Assume moreover that \g has exactly one end, and this end is thick. Then $G$ contains a 
subdivision of $\mathbb{H}$.
\end{lemma}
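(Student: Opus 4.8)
The plan is to exploit planarity to convert the purely metric information provided by Lemma~\ref{diverging rays} into a genuine combinatorial grid. Since $G$ has a thick end, it has infinitely many pairwise disjoint rays, so by \Lr{diverging rays} it contains a diverging pair of rays $R, L$. The key point is that in a plane graph with bounded degrees and co-degrees, a diverging pair of rays already forces a "ladder-like" structure between them: because the two rays drift apart in the graph metric, and because faces are bounded, the part of $G$ lying between $R$ and $L$ (in the planar sense) must contain infinitely many pairwise disjoint finite $R$--$L$ paths, and consecutive such paths together with segments of $R$ and $L$ bound regions of bounded diameter. First I would make this precise: use the one-endedness to show that $R\cup L$ does not separate $|G|$ in a way that isolates infinitely much of the graph on the "far" side, so that $R$ and $L$ cobound a planar strip, and then use boundedness of co-degrees plus the divergence to extract an infinite sequence of disjoint $R$--$L$ paths $P_1, P_2,\dots$ crossing this strip, ordered consistently along $R$ and along $L$.

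Next I would upgrade this strip (topologically a "fat double ray") to a full hexagonal grid by a second application of the same idea in the transverse direction. Having the strip between $R$ and $L$, I would pick a ray $Q$ inside the strip that crosses all the $P_i$ (for instance a ray following the $P_i$'s, or obtained from the thick end again), and argue that thickness of the unique end forces infinitely many disjoint rays that all have to pass through infinitely many of the strip-crossings; diverging these rays (again via \Lr{diverging rays}, applied to an appropriate subgraph) yields a second family of disjoint "vertical" rays $Q_1, Q_2, \dots$ crossing the strip transversally and pairwise diverging. The intersection pattern of the horizontal $P_i$'s and vertical $Q_j$'s, together with planarity forcing them to cross in grid order and bounded co-degree controlling the faces they bound, gives a grid-like minor; a subdivided $\mathbb H$ is then obtained by routing along the $P_i$'s and $Q_j$'s and using short connecting paths inside the bounded-diameter faces. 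The hexagonal (rather than square) structure is a cosmetic adjustment: $\mathbb H$ is a subdivision-minor of $\Z^2$, so producing a subdivision of the square grid suffices.

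I expect the main obstacle to be the second step: converting \emph{one} diverging pair of rays into a genuinely two-dimensional diverging grid. A single diverging pair, or even a diverging strip, does not obviously contain transversal divergence; one must use the thickness of the end essentially — infinitely many disjoint rays — together with planarity to force the transversal family, and one must ensure the horizontal and vertical families interact in the "grid" crossing pattern rather than, say, all the vertical rays bunching into one face of the strip. Controlling this will require a careful planar-topological argument: tracking which region of $|G|\setminus(R\cup L\cup P_1\cup\dots)$ each ray lives in, using \Lr{diverging rays}'s output to prevent the rays from being metrically close, and using bounded co-degree to guarantee that the combinatorial faces cut out by the two families have bounded size so they can be bridged by short paths. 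The bounded-degree and bounded-co-degree hypotheses, and the one-endedness, all enter precisely here, which matches the remark after \Lr{diverging rays} that bounded degree cannot be dropped.
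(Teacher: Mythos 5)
Your proposal heads in a genuinely different direction from the paper's proof, and there is a gap at its core. The paper does not try to ``fill in'' the region between the two diverging rays; instead, after merging the diverging pair $R_+, R_-$ into a single double ray $R$, it considers \emph{both} components $A, B$ of $\R^2 \setminus R$ and builds the grid \emph{outwards} from $R$, one double ray $R^i$ at a time. Each new $R^{i+1}$ is extracted from the face-boundaries incident with $R^i$ on the far side (using bounded co-degree and the comb lemma \cite[Lemma~8.2.2]{DiestelBook05}), so it is automatically disjoint from all earlier layers by planarity, and its two constituent rays diverge simply because they stay within bounded distance of the already-diverging tails of $R^i$. Crucially, the argument only ever invokes one diverging \emph{pair}, at the very first step; it never needs a pairwise diverging infinite family.

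Your strip picture, by contrast, requires an infinite family of pairwise disjoint (and, as you state it, pairwise diverging) ``vertical'' rays $Q_1, Q_2, \dots$ lying \emph{inside} the region cobounded by $R$ and $L$. This is where the argument breaks: \Lr{diverging rays} supplies only a single pair of diverging rays, and the claim that iterating it ``on an appropriate subgraph'' yields infinitely many pairwise diverging rays is essentially \Cnr{conj rays}, which the paper explicitly leaves open (indeed the natural strengthening of the underlying Menger-type lemma was recently refuted). Worse, the hypothesis of \Lr{diverging rays} --- an infinite set of pairwise disjoint rays --- need not hold in the subgraph induced by your strip: thickness of the end of $G$ does not localise to the region between one particular pair of diverging rays, and all but boundedly many of the disjoint rays witnessing thickness could lie on the other side of $R$ or of $L$. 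The fix is exactly the paper's reorientation: rather than seeking grid structure \emph{between} $R$ and $L$, treat $R_+\cup R_-$ as a single row of the eventual grid and grow new parallel rows out of the adjacent face-boundaries on each of the two planar sides.
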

\begin{proof}
By \Lr{diverging rays}, \g has a pair of diverging rays $R_+,R_-$. We may assume \obda\ that $R:= R_+ \cup R_-$ is a double ray after finite modifications.

It is known that every planar, 1-ended graph admits an embedding in $\R^2$ without accumulation points of vertices \cite[Lemma 12]{ThomassenRichter}, so we will assume from now on that \g is embedded that way. Thus $R$ separates $\R^2$ into two unbounded components $A,B$. 

Let $A_+^1$ be the subgraph of \g spanned by the face-boundaries incident with $R_+$ and contained in $A$. Note that we also put face-boundaries into $A^1_+$ if they share only one vertex with~$R$. Since $R_+,R_-$ are diverging, and the face-boundaries of \g have bounded lengths, it is easy to see that $A_+^1 \sm R$ contains an infinite subgraph. Applying the comb lemma \cite[Lemma 8.2.2]{DiestelBook05} to that subgraph yields an infinite comb $C_+^1$ with teeth in $R_+$. The same argument yields an infinite comb $C_-^1$ with teeth in $R_-$. Notice that the spines $R_+^1, R_-^1$ of $C_+^1, C_-^1$ diverge, because they are at bounded distance from  $R_+, R_-$, respectively. Thus we may assume \obda\ that $R^1:= R_+^1 \cup R_-^1$ is a double ray after finite modifications, and that the teeth of $C_+^1$ and  $C_-^1$ are disjoint.

We repeat the above construction with $R^0:=R$ replaced by $R^1$, and $A$ replaced by the component of $\R^2 \sm R^1$ disjoint from $R$, to obtain an infinite sequence $R^1,R^2,\ldots$ of double rays with pairwise diverging tails, each of those tails sending infinitely many disjoint paths to the previous double ray. We also repeat on the other side $B$ of $R$, to obtain an infinite sequence $R^{-1},R^{-2},\ldots$ of double rays with the same properties. Removing some of the paths between the $R^i$ we obtain a subdivision of $\mathbb{H}$ containing all $R^i, i \in \Z$.
\end{proof}

\subsection{The planar multi-ended case}
In order to handle the planar multi-ended case we will make use of a result from~\cite{CanonicalTTD} that allows us to split multi-ended graphs into smaller building blocks.
This result is formulated in terms of tree decompositions, and we now recall the relevant terminology.

\medskip
Let $G$ be a graph.
A \emph{\td} of~$G$ is a pair $(T,\cv)$ of a tree~$T$ and a family $\cv=(V_t)_{t\in V(T)}$ of vertex sets of~$G$, one for every node of~$T$, such that the following hold:
\begin{enumerate}
\item[(T1)] $V(G)=\bigcup_{t\in V(T)}V_t$;
\item[(T2)] for every $e\in E(G)$ there exists $t\in V(T)$ with $e\subseteq V_t$;
\item[(T3)] $V_{t_1}\cap V_{t_3}\subseteq V_{t_2}$ for all $t_1,t_2,t_3\in V(T)$, where $t_2$ lies on the $t_1$-$t_3$ path in~$T$.
\end{enumerate}
We call $T$ the \emph{decomposition tree} and the elements of~$\cv$ the \emph{parts} of the \td.
The sets $V_{t_1}\cap V_{t_2}$ with $t_1t_2\in E(T)$ are the \emph{adhesion sets} of the \td\ and the \emph{adhesion} of the \td\ is the supremum of the sizes of the adhesion sets.
We denote by $G[V_t]$ the subgraph of~$G$ induced by~$V_t$.
The \emph{torso} of~$V_t$ is $G[V_t]$ together with all (possibly new) edges $xy$ if $x$ and~$y$ lie in a common adhesion set in~$V_t$.

A \emph{separation} of~$G$ is a pair $(A,B)$ of vertex sets such that $A\cup B=V(G)$ and such that there is no edge from $A\sm B$ to $B\sm A$.
Its \emph{order} is the size of $A\cap B$.
A separation $(A,B)$ is \emph{tight} if there are components $C_A, C_B$ of $A,B$, respectively, such that $N(C_A)=A\cap B=N(C_B)$.
The separations \emph{induced by} a \td\ $(T,\cv)$ are those of the form
\[
\left(\bigcup_{t\in V(T_1)}V_t,\bigcup_{t\in V(T_2)}V_t\right),
\]
where $T_1$ and $T_2$ are the two components of $T-e$ for some edge $e$ of~$T$.

A separation \emph{distinguishes} a pair of ends \emph{efficiently} if it separates them and no separation of smaller order does the same.
A \td\ $(T,\cv)$ \emph{distinguishes} all ends \emph{efficiently} if for every pair of ends one of the separations induced by $(T,\cv)$ distinguishes those two ends efficiently.

Let $(T,\cv)$ be a \td.
If a ray $R$ has infinitely many vertices in a part $V_t$, then all rays equivalent to~$R$ must have infinitely many vertices of the same part and we say that the end that contains~$R$ \emph{lives in}~$t$.
If $R$ does not live in any part, then there must exist a ray $R_T$ in~$T$ such that every node on~$R_T$ contains a vertex of~$R$.
Note that $R_T$ is uniquely determined up to equivalence.
Furthermore, any ray equivalent to~$R$ must define a ray of~$T$ equivalent to~$R_T$.
We then say that the end that contains~$R$ \emph{lives in} the end of~$T$ that contains~$R_T$.
We note that if $(T,\cv)$ has finite adhesion $k\in\N$, then every end of~$G$ that lives in an end of~$T$ cannot contain more than $k$ disjoint rays and hence cannot be thick.

If the automorphism group $\Aut(G)$ of \g induces an action on the decomposition tree of a \td\ via the parts, then we call the \td\ \emph{$\Aut(G)$-invariant}. We will use the following result that provides such a decomposition for the kind of graph we are interested in:

\begin{theorem}[{\cite[Theorem 7.3]{CanonicalTTD}}] \label{CHM}
Let \g be a \lf\ graph and let $k\in \mathbb N$.
If every two ends of~\g can be separated by at most $k$ vertices, then there exists an $\Aut(G)$-invariant \td\ of \g that distinguishes its ends efficiently.\footnote{\cite[Theorem 7.3]{CanonicalTTD} is stated in terms of \defi{profiles}, which is a generalisation of ends; see \cite[\S 6]{CanonicalTTD} for details.}
\end{theorem}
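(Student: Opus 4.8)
The plan is to construct the tree decomposition from a canonical, nested, $\Aut(G)$-invariant set of separations, using the tangle/profile tree-decomposition machinery in the form needed for \lf\ infinite graphs. Since \g is \lf, every finite separator isolates the ends cleanly: for an end $\omega$ and a separation $(A,B)$ of finite order, every ray in $\omega$ has a tail in exactly one of $G[A],G[B]$, and all rays in $\omega$ agree on this choice (else the finite set $A\cap B$ would be crossed by infinitely many disjoint paths), so $\omega$ induces a consistent orientation of all separations of order at most $k$. If two ends $\omega\ne\omega'$ can be separated by at most $k$ vertices, write $\kappa(\omega,\omega')$ for the least order of a separation orienting them differently, and call a separation $(A,B)$ an \emph{efficient distinguisher} if it distinguishes some such pair with $\kappa(\omega,\omega')$ equal to the order of $(A,B)$. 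The goal is to extract from the efficient distinguishers a nested subset $N$, defined by $\Aut(G)$-invariant conditions, still distinguishing every pair of ends separable by at most $k$ vertices, and then to convert $N$ into the required \td.

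I would build $N$ by induction on the order $i=1,\dots,k$. Suppose $N_{<i}:=N_1\cup\dots\cup N_{i-1}$ is a nested, $\Aut(G)$-invariant set distinguishing every pair with $\kappa<i$; let $N_i$ consist of the efficient distinguishers of order $i$ that are nested with all of $N_{<i}$ and are \emph{extremal}, i.e.\ whose small side is inclusion-minimal among such separations with a prescribed orientation. Two facts must be checked. First, $N_{<i}\cup N_i$ is nested: if two efficient distinguishers of order at most $i$ cross, submodularity of the order function gives that one of the two corner separations $(A\cap C,B\cup D)$, $(A\cup C,B\cap D)$ again has order at most $i$, and a standard profile/tangle argument — using that each end is consistently oriented by all separations of bounded order — shows this corner is still an efficient distinguisher of one of the relevant pairs; hence uncrossing never raises complexity, so the extremal efficient distinguishers of order $i$ are automatically pairwise nested and nested with $N_{<i}$. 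Second, $N_{<i}\cup N_i$ distinguishes every pair with $\kappa\le i$: for a pair with $\kappa=i$ not yet separated by $N_{<i}$, take any order-$i$ efficient distinguisher for it and uncross it successively against $N_{<i}$ and against the order-$i$ distinguishers, each step (by the previous point) keeping it an order-$i$ distinguisher of the same pair, until it becomes extremal and hence lies in $N_i$. Since the conditions defining $N_i$ are preserved by automorphisms, $N_i$ is $\Aut(G)$-invariant; put $N:=N_1\cup\dots\cup N_k$.

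It remains to pass from $N$ to a \td. This is the standard correspondence between nested separation systems and tree-decompositions: the separations in $N$ become the edges of a tree $T$, and the part $V_t$ is the set of vertices not strictly separated from $t$ by any member of $N$. Here local finiteness and the uniform bound $k$ do the work in the infinite setting — they ensure $T$ is a genuine tree, that $(T,\cv)$ satisfies (T1)--(T3) with adhesion at most $k$, and that every end of \g either lives in a part or in an end of $T$, so that $(T,\cv)$ distinguishes every pair of ends separable by $\le k$ vertices efficiently. As $N$ is $\Aut(G)$-invariant, every automorphism of \g permutes $N$ and hence acts on $T$ through the parts, so the \td\ is $\Aut(G)$-invariant.

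The main obstacle is maintaining canonicity throughout the uncrossing: no arbitrary tie-breaking is allowed, so the extremal selection at each order has to be simultaneously rich enough to distinguish every required pair, automatically nested by submodularity, and definable in invariant terms. Combining these three demands with the compactness and local-finiteness arguments needed to lift the finite-separator analysis to the whole graph is the technical heart of the statement — and the reason the detailed argument is carried out in~\cite{CanonicalTTD}.
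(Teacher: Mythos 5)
The paper does not prove this theorem: it is quoted (with ``profiles'' specialised to ``ends'') from Theorem~7.3 of \cite{CanonicalTTD}, as the attached footnote makes explicit. So there is no in-paper argument to compare your sketch against; the question is only whether your outline is a faithful account of the kind of argument that establishes it.

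At a high level it is. You correctly observe that in a \lf\ graph every end consistently orients every finite-order separation --- which is exactly what makes ends a special case of the profiles in \cite{CanonicalTTD} --- and the broad plan (induct on separation order; at each level extract a canonical nested family of efficient distinguishers via submodular uncrossing; then pass from an $\Aut(G)$-invariant nested separation system to an invariant \td) is the right one. But two steps that your sketch presents as routine consequences of uncrossing are in fact the substance of the cited theorem, and the sketch asserts them rather than proves them. First, ``uncross successively until it becomes extremal'' has no automatic termination in an infinite graph: the collection of order-$i$ separations distinguishing a fixed pair of ends is typically infinite, so one must argue --- using local finiteness, usually via a compactness-style argument --- that an inclusion-minimal distinguisher of the right order exists and is reached at all. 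Second, it is not a formal consequence of submodularity plus the corner lemma that the extremal efficient distinguishers of order $i$ are pairwise nested and nested with the lower-order system; obtaining a family that is simultaneously rich enough, nested, and defined in invariant terms in the presence of infinitely many mutually crossing candidates is precisely what the abstract separation-system and profile machinery of \cite{CanonicalTTD} is built to deliver. You flag both issues as the technical heart, which is the right diagnosis, but the sketch does not bridge them and so should be read as a statement of what must be proved rather than as a proof.
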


In order to apply the 1-ended case to the parts that are given by Theorem~\ref{CHM}, we must ensure that they are \qt.
This will be done by the following result by Esperet, Giocanti and Legrand-Duchesne \cite{EGL-QuasiTransitiveGraphsAvoidingMinor}.

\begin{lemma}[{\cite[Lemma 3.12]{EGL-QuasiTransitiveGraphsAvoidingMinor}}] \label{EGL 3.12}
Let \g be a \qt\ \lf\ graph and let $k\in\mathbb N$.
Let $(T, \cv)$, with $\cv = (V_t)_{t\in V(T)}$, be an $\Aut(G)$-invariant \td\ of~\g whose induced separations are tight and have order at most~$k$.
Then, for every $t\in V(T)$, the stabiliser of~$t$ induces
a \qt\ action on $G[V_t]$.
\end{lemma}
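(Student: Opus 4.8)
The plan is to reduce everything to the claim that \emph{every vertex of \g lies in only finitely many parts}, and then to count orbits. First, the stabiliser $\Aut(G)_t$ really does act on $G[V_t]$: if $\phi\in\Aut(G)$ fixes~$t$ then $\phi(V_t)=V_{\phi(t)}=V_t$, and since $\phi$ is a graph automorphism it permutes the edges of \g with both endpoints in~$V_t$. For the orbit count, pick representatives $v_1,\dots,v_r$ of the (finitely many, by quasi-transitivity) $\Aut(G)$-orbits meeting $V_t$. Given $w$ in the orbit of some $v_j$ with $w\in V_t$, choose $\phi_w\in\Aut(G)$ with $\phi_w(v_j)=w$ and put $s_w:=\phi_w^{-1}(t)$; then $v_j=\phi_w^{-1}(w)\in\phi_w^{-1}(V_t)=V_{s_w}$, so $s_w$ belongs to the set $T_{v_j}$ of nodes whose part contains~$v_j$. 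If $s_w=s_{w'}$, then $\psi:=\phi_{w'}\phi_w^{-1}$ satisfies $\psi(t)=\phi_{w'}(s_w)=\phi_{w'}(s_{w'})=t$ and $\psi(w)=\phi_{w'}(v_j)=w'$, so $w$ and $w'$ lie in a common $\Aut(G)_t$-orbit. Hence the orbit of $v_j$ inside $V_t$ breaks into at most $|T_{v_j}|$ orbits of $\Aut(G)_t$, and it remains to bound $|T_v|$ for each vertex~$v$.

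To see that $T_v$ is finite, note first that $T_v$ is a subtree of~$T$ by~(T3), and that it has bounded degrees. Indeed, for $r\in T_v$ and a $T$-neighbour $r'$ of~$r$ with $v\in V_{r'}$, the vertex~$v$ lies in the adhesion set $V_r\cap V_{r'}$; by tightness the induced separation has, on the $r'$-side, a component~$C_{r'}$ of $G-(V_r\cap V_{r'})$ with $N(C_{r'})=V_r\cap V_{r'}$, so $v$ has a neighbour in~$C_{r'}$. By~(T3) these components $C_{r'}$ are pairwise disjoint for distinct neighbours~$r'$ (a shared vertex would lie in $V_r$, which each $C_{r'}$ avoids), so there are at most $\deg_G(v)$ such~$r'$, and $T_v$ is locally finite. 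If $T_v$ were infinite it would contain a ray $s_0s_1\cdots$. Let $(X_m,Y_m)$ be the separation induced by the edge $s_ms_{m+1}$, with $s_m$ on the $X_m$-side; then the $X_m$ increase, the $Y_m$ decrease, $v\in X_m\cap Y_m$ for every~$m$, and $\bigcup_m X_m=V(G)$ because every node of~$T$ eventually lies on the near side of such an edge. Consequently $W:=\bigcap_m Y_m$ is contained in $\{u\in V(G): u\in X_m\cap Y_m\text{ for all large }m\}$, which has at most $k$ elements since the adhesion sets do; so $W$ itself is contained in $X_m\cap Y_m$ for all large~$m$. But by tightness, for each~$m$ the vertex~$v$ has a neighbour in a component of $G-(X_m\cap Y_m)$ lying on the $Y_m$-side, hence disjoint from $X_m\cap Y_m$; as $v$ has only finitely many neighbours, some fixed neighbour~$b$ serves for infinitely many~$m$. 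Then $b\in Y_m$ for all~$m$, so $b\in W$ and therefore $b\in X_m\cap Y_m$ for all large~$m$, contradicting that $b$ lies outside $X_m\cap Y_m$ for infinitely many~$m$. Hence $T_v$ is finite, completing the argument.

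The orbit count and the invocations of~(T3) are routine; the step that will need the most care is the finiteness of $T_v$, and within it the passage from an infinite ray in $T_v$ to a contradiction --- pinning down that $\bigcap_m Y_m$ has size at most~$k$ and then deriving the contradiction from tightness via a pigeonhole on the neighbours of~$v$. I would therefore isolate ``$T_v$ is finite for every $v$'' as a lemma in its own right (it uses only local finiteness of~\g, tightness of the induced separations, and the bound~$k$ on the adhesion, not quasi-transitivity or $\Aut(G)$-invariance), and then prove Lemma~\ref{EGL 3.12} from it as above.
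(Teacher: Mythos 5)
The paper does not prove this lemma; it is cited from \cite[Lemma 3.12]{EGL-QuasiTransitiveGraphsAvoidingMinor}, so there is no in-paper proof to compare your attempt against. Your self-contained argument is correct. The orbit-counting step properly reduces quasi-transitivity of the induced $\Aut(G)_t$-action on $G[V_t]$ to the finiteness of $T_v:=\{s\in V(T): v\in V_s\}$ --- note that the only use of $\Aut(G)$-invariance here is $\phi^{-1}(V_t)=V_{\phi^{-1}\cdot t}$, and the map $w\mapsto s_w$ need not be canonical, only well-defined for a fixed choice of each $\phi_w$, which is all you use. Your proof that $T_v$ is finite is also sound on both counts: by (T3) it is a subtree; by tightness and (T3) the components $C_{r'}$ attached to distinct $T_v$-neighbours $r'$ of $r$ are pairwise disjoint and each contains a $G$-neighbour of $v$, giving $\deg_{T_v}(r)\le \deg_G(v)$; and the ray-exclusion correctly plays the monotone separations $(X_m,Y_m)$, the size-$\le k$ bound on $\bigcap_m Y_m$, and a pigeonhole on $N(v)$ off against the tightness-forced neighbour $b\in Y_m\setminus X_m$. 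Your instinct to isolate ``$T_v$ is finite for every $v$'' as a standalone lemma, needing only local finiteness, tightness of the induced separations, and the adhesion bound, is the right structural choice. One cosmetic remark: $\phi_w^{-1}(t)$ should be read as the action of $\phi_w^{-1}$ on the \emph{node} $t\in V(T)$ furnished by $\Aut(G)$-invariance, not as evaluation on a vertex of $G$; writing $\phi_w^{-1}\cdot t$ would keep the two actions visually distinct.
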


We are now ready for the second step of our proof:

\begin{theorem} \label{full H planar 2}
Let \g be a \lf, \qt, plane graph that has a thick end. Then $G$ contains a subdivision of $\mathbb{H}$.
\end{theorem}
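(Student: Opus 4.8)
The plan is to reduce, in a couple of steps, to the hypotheses of \Lr{full H planar}. First I would cut \g along a canonical \td\ to reduce to the $1$-ended case, and then, within the $1$-ended case, pass to an auxiliary graph of bounded co-degree; at that point \Lr{full H planar} applies directly.

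\emph{Reducing to one end.} It is well known that in a \lf\ \qt\ graph with more than one end any two ends can be separated by a bounded number of vertices; fix such a bound $k$. By \Tr{CHM} there is an $\Aut(G)$-invariant \td\ $(T,\cv)$ of adhesion at most $k$ that distinguishes all ends of \g efficiently, and by a standard refinement we may assume its induced separations are tight without spoiling invariance or the adhesion bound. By \Lr{EGL 3.12} the stabiliser of each node $t$ acts quasi-transitively on $G[V_t]$. Since the adhesion is bounded, no thick end of \g lives in an end of $T$ (as noted in the text preceding \Tr{CHM}), so our thick end lives in some part $V_{t_0}$. I would then pass to the torso $H$ of $V_{t_0}$: using efficiency of the decomposition one checks that $H$ has exactly one end; this end is thick, since a family of pairwise disjoint rays of the thick end of \g can be rerouted into $H$ by replacing each excursion out of $V_{t_0}$ --- which leaves and re-enters through a common adhesion set --- by an edge of the torso; and $H$ is \qt\ by \Lr{EGL 3.12} (its vertex set is $V_{t_0}$ and the added edges are permuted by the stabiliser). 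Finally, a subdivision of $\mathbb H$ in $H$ yields one in \g, because every torso edge inside an adhesion set can be re-expanded to a path of \g. So it remains to find a subdivision of $\mathbb H$ in the $1$-ended \qt\ planar graph $H$ --- provided $H$ is planar, which is the delicate point discussed below.

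\emph{Reducing the co-degree.} Now assume \g is \lf, \qt, plane and $1$-ended with a thick end. Passing to a canonical block / Tutte-type decomposition (again compatible with $\Aut(G)$) and to the $3$-connected part through which the thick end runs --- using torsos as above, where now only a single edge has to be added across each $2$-separator --- I would reduce to the case that \g is moreover $3$-connected, so that its embedding in the plane is essentially unique and hence preserved by $\Aut(G)$. An orbit-counting argument along face-boundaries (each directed edge has a unique face to its left, there are finitely many $\Aut(G)$-orbits of directed edges, and the face-tracing successor map on directed edges is $\Aut(G)$-equivariant) then shows that \g has bounded face lengths, unless it has a face bounded by a ray or double ray; in the latter situation one argues that \g is \qi\ to a tree and hence has no thick end, a contradiction. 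Thus \g is $1$-ended, plane, of bounded degree and bounded co-degree, and has a thick end, and \Lr{full H planar} completes the proof.

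\emph{The main obstacle.} The crux is keeping planarity and quasi-transitivity alive simultaneously through the reductions. The naive torso of a part of an end-decomposition adds a clique on each adhesion set, and a clique on five or more vertices need not fit in the face available to it, so the torso of a planar graph need not be planar. What actually suffices for the rerouting is to add, across each adhesion set, a cycle through its vertices in the cyclic order in which they appear on the relevant face; this \emph{does} embed, but selecting such cycles $\Aut(G)$-equivariantly requires a canonical combinatorial embedding, which is why the reduction to the $3$-connected case has to be interleaved with (or performed before) the reduction to one end. Getting this order right, and checking that every intermediate graph is still \lf, \qt, planar and inherits the thick end, is where the real work of the proof lies.
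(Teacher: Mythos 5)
Your reduction via accessibility, the canonical tree-decomposition of \cite{CanonicalTTD} (\Tr{CHM}), and \Lr{EGL 3.12} matches the paper exactly, and you have correctly put your finger on the genuine difficulty: the torso of a planar graph need not be planar, because it adds a clique on each adhesion set. But the fix you propose --- adding only a cycle through each adhesion set in its facial cyclic order, which forces you first to reduce to the $3$-connected case to get a canonical embedding, and to interleave a Tutte-type decomposition with the end-decomposition --- is much heavier machinery than what is needed, and you have left it as a sketch of a sketch. The paper sidesteps this obstacle entirely with a simpler move: instead of forming the abstract torso, define $H_t$ to be $G[V_t]$ together with \emph{all shortest paths in $G$} between pairs of vertices lying in a common adhesion set. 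Since ``all shortest paths'' is a canonical choice, this preserves $\Aut(G)$-invariance (only finitely many orbits of vertices are added under the stabiliser of $t$, so $H_t$ stays \qt); and since $H_t$ is a \emph{subgraph of $G$}, it is automatically plane, so the planarity question never arises. The rerouting of rays through $H_t$ works just as it would through the torso, and $H_t$ inherits a unique thick end. This is the idea your proposal is missing.

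The second half of your argument --- bounding the co-degree by an orbit count along face-boundaries after passing to the $3$-connected case --- is in the right spirit but again does more than necessary. The paper simply deletes some orbits of vertices to make $H_t$ $2$-connected (not $3$-connected) and then invokes Kr\"on's theorem \cite[Theorem~8(1)]{KroInf}, which states that a $2$-connected \qt\ plane graph with exactly one end has bounded co-degree. Your orbit-counting sketch would need, at minimum, the canonical embedding furnished by $3$-connectivity and a careful treatment of the possibility of an infinite face, both of which Kr\"on's theorem already packages up. In short: your diagnosis of the obstacle is correct, but the cure the paper uses --- shortest paths in place of torso edges, plus Kr\"on's co-degree bound after a mild $2$-connectivity reduction --- is substantially simpler than what you propose, and your version would require filling in several nontrivial steps (canonical embedding, equivariant choice of facial cycles, ruling out rays as face-boundaries) that are not yet done.
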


\begin{proof}
By a theorem of Dunwoody~\cite{DunPla}  (see also \cite[Theorem 8.2]{H-planarTrans}) \g is \defi{accessible}, that is, there exists $k\in\mathbb N$ such that every two ends can be separated by at most $k$ vertices.
Thus Theorem~\ref{CHM} implies the existence of an $\Aut(G)$-invariant \td\ $(T,\cv)$ that distinguishes the ends efficiently.
In particular,  the adhesion of $(T,\cv)$ is at most~$k$.
Since $G$ has a thick end, there must be a vertex $t\in V(T)$ such that some thick end of~\g lives in~$t$.
As $(T,\cv)$ distinguishes all ends of~$G$ efficiently, no other end of~$H$ lives in~$t$.
The separations induced by $(T,\cv)$ have order at most~$k$ and are tight by the properties of $(T,\cv)$.
Thus Lemma~\ref{EGL 3.12} implies that $G[V_t]$ is \qt.

Let $H_t$ be obtained from $G[V_t]$ by adding all shortest paths between vertices that lie in a common adhesion set.
Then $H_t$ is connected and, since we did not destroy any symmetry and added only finitely many orbits of vertices under the stabiliser of~$t$, it is \qt, too.
It is straight-forward to check that $H_t$ has a unique thick end.
By removing some orbits of vertices, if necessary, we may assume that $H_t$ is 2-connected.
A result of Kr\"on \cite[Theorem 8 (1)]{KroInf} says that such an $H_t$ has bounded co-degree.
We can thus apply Lemma~\ref{full H planar} and obtain a subdivision of~$\mathbb H$ in~$H_t$, which is also a subdivision of~$\mathbb H$ in~$G$.
\end{proof}

\subsection{The non-planar case}

For the non-planar case we will apply another result of Esperet, Giocanti \& Legrand-Duchesne \cite{EGL-QuasiTransitiveGraphsAvoidingMinor}, that allow us to find an 1-ended quasi-transitive minor $H$ in a quasi-transitive graph \G, where $H$ is planar if \g has no $K_\infty$ minor.

\begin{theorem}[{\cite[Theorem 4.3]{EGL-QuasiTransitiveGraphsAvoidingMinor}}]\label{thm_EGL}
Let $G$ be a \lf\ graph excluding $K_\infty$ as a minor and let $\Gamma$ be a group with a \qt\ action on~$G$.
Then there is an integer $k$ such that $G$ admits a $\Gamma$-invariant tree-decomposition $(T, \cv)$ with $\cv = (V_t)_{t\in V(T)}$ and of adhesion at most~$3$ such that for every $t\in V(T)$ the torso of $V_t$ is a $\Gamma_t$-\qt\ minor of~$G$ which is either planar or has treewidth at most~$k$.
\end{theorem}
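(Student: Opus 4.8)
The plan is to split the argument into a \emph{decomposition step} and a \emph{structural step}. In the decomposition step I would produce a canonical --- and hence $\Gamma$-invariant --- \td\ $(T,\cv)$ of~$G$ of adhesion at most~$3$, all of whose induced separations are tight and all of whose torsos are minors of~$G$ that are \emph{quasi-$4$-connected}, in the sense that every separation of such a torso of order at most~$3$ has a side of bounded size. The structural step then classifies these highly connected torsos: each of them is a \lf, \qt\ graph with no $K_\infty$ minor in which no separation of order at most~$3$ splits off a large piece, and the claim is that every such graph is planar or has bounded treewidth. Taking the maximum of these treewidth bounds over the finitely many $\Gamma$-orbits of torsos yields the integer~$k$.

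For the decomposition step I would invoke the theory of canonical tree-decompositions. Canonical decompositions of adhesion at most~$2$ into $3$-connected torsos, cycles and bonds are classical (Tutte), and what is needed here --- a canonical \td\ of adhesion at most~$3$ into quasi-$4$-connected and small torsos, with tight induced separations --- is of the kind provided by the machinery of~\cite{CanonicalTTD}. Canonicity yields an action of $\Aut(G)$, hence of~$\Gamma$, on $(T,\cv)$ via the parts, so $(T,\cv)$ is $\Gamma$-invariant. Tightness of the induced separations means that the (at most three) vertices of a common adhesion set all lie in the closure of one component on the far side, so they are pairwise joined by paths there; the extra torso edges are therefore realised by pairwise internally disjoint paths of~$G$, which shows that each torso is a minor of~$G$. (If necessary one first adds to each torso a shortest path of~$G[V_t]$ between each pair of vertices lying in a common adhesion set, to make the torso connected; this adds only finitely many $\Gamma_t$-orbits of vertices.) By Lemma~\ref{EGL 3.12}, the stabiliser~$\Gamma_t$ of a node~$t$ acts quasi-transitively on the corresponding torso, and since the minor relation is transitive, each torso (being a minor of~$G$) again excludes $K_\infty$.

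The heart of the proof is the structural step. Let $H$ be a \lf, \qt, quasi-$4$-connected graph with no $K_\infty$ minor, and suppose that $H$ has unbounded treewidth. Then $H$ has arbitrarily large grid minors, and --- since by quasi-transitivity such a minor can be reproduced coherently along an orbit of $\Aut(H)$ --- a compactness argument (similar in spirit to the one underlying Lemma~\ref{diverging rays}) produces a subdivision of the infinite grid~$\Z^2$ in~$H$. If, in addition, $H$ were non-planar, then by Kuratowski's theorem a subdivision of $K_5$ or $K_{3,3}$ would sit inside some ball $B(v,r)$, and hence, by quasi-transitivity and local finiteness, inside infinitely many pairwise disjoint balls that can be arranged to meet the grid substructure at well-separated places. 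Using quasi-$4$-connectivity to link each such crossing gadget into the grid and into the branch sets constructed so far, one would route infinitely many pairwise disjoint, connected, and pairwise adjacent subgraphs --- a $K_\infty$ minor, contradicting the hypothesis. So an $H$ of unbounded treewidth must be planar, while an $H$ of bounded treewidth already satisfies the conclusion.

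The main obstacle is this final routing: converting ``an infinite grid minor together with an orbit-periodic, hence dense, supply of $K_5$- and $K_{3,3}$-crossings and uniformly high local connectivity'' into an actual $K_\infty$ minor requires realising infinitely many pairwise connections simultaneously, and is the infinitary, quasi-transitive analogue of the already non-trivial finite fact that large treewidth together with an abundance of crossings forces large clique minors; quasi-transitivity must be used to guarantee that fresh, mutually non-interfering crossing regions are still available at every stage of a recursive construction of the branch sets. A secondary difficulty is to obtain the adhesion-$3$ canonical decomposition of the first step with invariance, tightness, and minor-realisability of the torsos all holding at once, which calls for the more recent canonical-decomposition technology rather than the classical Tutte decomposition alone.
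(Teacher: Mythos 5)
Note first that the paper you are reading does not prove this statement: Theorem~\ref{thm_EGL} is quoted verbatim, with citation, from Esperet, Giocanti \& Legrand-Duchesne \cite{EGL-QuasiTransitiveGraphsAvoidingMinor}, and the present paper's contribution begins where that theorem leaves off. There is therefore no ``paper's own proof'' here to compare against; what you have written is a sketch of a re-proof of a substantial external result, and it must be judged on its own terms.

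Your two-step outline --- an $\Aut(G)$-invariant adhesion-$3$ tree-decomposition into quasi-$4$-connected torsos, followed by a structural classification of such torsos --- is broadly the shape of the actual argument in \cite{EGL-QuasiTransitiveGraphsAvoidingMinor}, and your remarks about tightness, torsos being minors, and the use of Lemma~\ref{EGL 3.12} are all sensible. But the ``structural step'' is where essentially all of the difficulty lives, and as written it contains genuine gaps rather than a proof. The inference ``unbounded treewidth $\Rightarrow$ arbitrarily large grid minors $\Rightarrow$ (by quasi-transitivity and compactness) a $\Z^2$ subdivision'' does not stand on its own: large finite grid minors need not nest coherently, and passing from unbounded treewidth to an infinite grid in a quasi-transitive graph requires exactly the kind of thick-end structure theory (via \cite{KroMolQua}) and coherence arguments that must be supplied explicitly; indeed, producing a full infinite grid from a thick end is close to the main theorem of this very paper, not a throwaway compactness step. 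Likewise, the claim that non-planarity plus such a grid plus ``an orbit-dense supply of Kuratowski gadgets'' yields a $K_\infty$ minor is, as you yourself flag, the crux: it is the quasi-transitive, quasi-$4$-connected analogue of Thomassen's theorem \cite{Thomassen-Hadwiger}, and establishing it is precisely what occupies a large part of \cite{EGL-QuasiTransitiveGraphsAvoidingMinor}. Your proposal names the obstacle accurately but does not overcome it; to close the structural step you would either need to invoke Thomassen's theorem and its quasi-transitive extensions directly, or accept that Theorem~\ref{thm_EGL} is a result to be cited, not re-derived in passing.
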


We can now complete the proof of our main result \Tr{full Halin VT general intro}, which we restate for convenience: 

\begin{theorem} \label{full Halin VT general} 
Let \g be a \lf, \qt\ graph that is not \qi\ to a tree. Then \g contains a subdivision of $\mathbb{H}$.
\end{theorem}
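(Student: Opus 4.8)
The plan is to bootstrap the planar case (Theorem~\ref{full H planar 2}) to the general case using Theorem~\ref{thm_EGL}, splitting according to whether $G$ has a $K_\infty$ minor. First I would observe that if $G$ contains $K_\infty$ as a minor, then $G$ contains a subdivision of $\mathbb H$ directly: any graph with a $K_\infty$ minor has a $K_{3,3}$-subdivision-rich structure from which one extracts a subdivision of the (even locally finite) hexagonal grid $\mathbb H$ — more concretely, $\mathbb H$ has countably many vertices of degree $3$, so a $K_\infty$ minor, being a universal minor among countable graphs, has $\mathbb H$ as a minor, and since $\mathbb H$ has maximum degree $3$ a minor suffices to give a topological minor (subdivision). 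So we may assume $G$ excludes $K_\infty$ as a minor.

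Now apply Theorem~\ref{thm_EGL} with $\Gamma = \Aut(G)$ acting quasi-transitively on $G$: we obtain a $\Gamma$-invariant tree-decomposition $(T,\cv)$ of adhesion at most $3$ whose torsos are $\Gamma_t$-quasi-transitive minors of $G$, each either planar or of treewidth at most $k$. The next step is to locate a thick end. Since $G$ is not quasi-isometric to a tree, $G$ is not accessible-into-a-tree in the relevant sense; more usefully, a locally finite quasi-transitive graph that is not quasi-isometric to a tree must have a thick end — this is the combinatorial input that replaces "has a thick end" as a hypothesis. (If every end were thin, bounded adhesion accessibility results would force $G$ to be quasi-isometric to a tree, contradiction.) Having a thick end $\omega$, I would argue as in the proof of Theorem~\ref{full H planar 2}: a tree-decomposition of bounded adhesion cannot have a thick end living in an end of $T$, so $\omega$ lives in some node $t \in V(T)$, and the torso $H_t$ of $V_t$ inherits a thick end. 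Since $H_t$ has treewidth at most $k$ or is planar, and a graph of bounded treewidth has no thick end (bounded treewidth forces a bounded number of disjoint rays), $H_t$ must be planar. Then $H_t$ is a locally finite, quasi-transitive (by Lemma~\ref{EGL 3.12}, or directly from Theorem~\ref{thm_EGL}), plane graph with a thick end, so Theorem~\ref{full H planar 2} gives a subdivision of $\mathbb H$ in $H_t$.

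The last step is to transport this subdivision back to $G$. Here $H_t$ is only a \emph{minor} of $G$, not a subgraph, so a subdivision of $\mathbb H$ in $H_t$ does not immediately yield one in $G$. However, $\mathbb H$ has maximum degree $3$, so a minor of $G$ isomorphic to (a subdivision of) $\mathbb H$ whose branch sets can be chosen to have the degree-$3$ vertices mapped appropriately gives a \emph{topological} minor of $G$: at a branch vertex of degree $3$ in $\mathbb H$ we need only route $3$ internally disjoint paths inside the corresponding connected branch set, which is always possible. So a minor of $G$ isomorphic to a subdivision of $\mathbb H$ upgrades to a subdivision of $\mathbb H$ in $G$, completing the proof. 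I expect the main obstacle to be the combinatorial lemma that a locally finite quasi-transitive graph not quasi-isometric to a tree has a thick end — this requires invoking the structure theory (e.g.\ accessibility of quasi-transitive graphs via Thomassen--Woess or Dunwoody, together with the fact that a tree-decomposition into thin-ended quasi-transitive parts with bounded adhesion yields a quasi-isometry to a tree), and should be extracted carefully from the cited literature on quasi-transitive graphs and their tree-amalgamation decompositions.
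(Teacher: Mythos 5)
Your proposal follows the paper's proof essentially step for step: reduce to the $K_\infty$-minor-free case via the low-degree observation, apply Theorem~\ref{thm_EGL}, invoke that a locally finite quasi-transitive graph without a thick end is quasi-isometric to a tree (the paper cites Kr\"on--M\"oller directly here, which is the clean reference for what you sketch), locate a thick end living in a node $t$, conclude the torso $H_t$ is planar since bounded treewidth forbids thick ends, and apply Theorem~\ref{full H planar 2}. You also correctly make explicit a step the paper leaves terse, namely that because $\mathbb{H}$ has maximum degree $3$, the $\mathbb{H}$-subdivision found inside the minor $H_t$ lifts to a genuine subdivision in $G$.
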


\begin{proof} 
Let us suppose that \g does not contain a subdivision of~$\mathbb H$. Since $\mathbb H$ has maximum degree 3, \g does not contain $\mathbb H$ as a minor either.
In particular, \g does not contain $K_\infty$ as a minor.
Thus Theorem~\ref{thm_EGL} implies that \g has an $\Aut(G)$-invariant \td\ $(T,\cv)$ of adhesion at most~$3$ such that for every $t\in V(T)$ the stabiliser of~$t$ acts \qt ly on the torso of~$V_t$ and this torso either is planar or has treewidth at most $k\in\mathbb N$.

It is known that every \qt\ \lf\ graph without any thick end is \qi\ to a tree  \cite[Theorem 5.5]{KroMolQua}.
Thus, $G$ has a thick end.
Since the adhesion of $(T,\cv)$ is at most~$3$, no thick end of~\g lives in an end of~$T$.
Thus, there exists a vertex $t$ of~$T$ such that some thick end $\omega$ of~\g lives in~$V_t$.
This means that the torso $H_t$ of~$t$ cannot have finite treewidth, and hence is planar.
Rays in~$\omega$ define rays in the torso by restriction.
Easily, equivalence of rays is preserved too.
Furthermore, infinitely many pairwise disjoint rays in~$\omega$ also define infinitely many (equivalent) pairwise disjoint rays in~$H_t$.
Thus $H_t$ has a thick end.
Theorem~\ref{full H planar 2} now implies the existence of a subdivision of~$\mathbb H$ in~$H_t$.
This is also a subdivision of~$\mathbb H$ in~$G$, which contradicts our choice of~$G$.
\end{proof}

Since one-ended quasi-transitive graphs have a thick end (\cite[Proposition 5.6]{Thomassen-Hadwiger}), they cannot be quasi-isometric to a tree and hence Theorem~\ref{full Halin VT general} implies

\begin{corollary} \label{full Halin VT}
Let \g be an 1-ended, locally finite, \vt\ graph. Then \g contains a subdivision of $\mathbb{H}$.
\end{corollary}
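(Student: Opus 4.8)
The plan is to deduce \Cr{full Halin VT} directly from \Tr{full Halin VT general}. Two of the three hypotheses of that theorem hold trivially when \g is locally finite, 1-ended and vertex-transitive: vertex-transitivity means a single orbit of vertices, so \g is in particular quasi-transitive, and local finiteness is assumed. Hence everything reduces to showing that such a \g is \emph{not} quasi-isometric to a tree, and then \Tr{full Halin VT general} supplies the subdivision of $\mathbb H$.

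For that remaining point I would first record that the unique end of $G$ is thick: this is \cite[Proposition 5.6]{Thomassen-Hadwiger}, and can alternatively be derived from Halin's classification of the possible number of ends of a vertex-transitive graph together with the observation that a thin unique end would, by translating a minimum separator exposing it, produce separators of bounded order between two tails of that end, contradicting 1-endedness. Then I would invoke --- or reprove --- the fact that a connected graph of bounded degree with a thick end cannot be quasi-isometric to a tree; a locally finite vertex-transitive graph has bounded degree, so this applies here. Concretely, suppose $f\colon G\to T$ is a quasi-isometry with $T$ a tree, and let $R_1,R_2,\dots$ be pairwise disjoint rays in the thick end. Each $f(R_i)$ is a quasi-geodesic ray, hence, trees being $0$-hyperbolic, lies within a bound $D$ --- depending only on the quasi-isometry constants --- of a genuine ray of $T$, which determines an end of $T$. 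If all these ends coincide, the $R_i$ lie pairwise within bounded Hausdorff distance in $G$, so a ball of fixed radius around any vertex of $R_1$ contains a distinct vertex of each of $R_1,\dots,R_k$; choosing $k$ larger than the uniform bound on the size of such a ball gives a contradiction. If instead two of them, say $f(R_1)$ and $f(R_2)$, determine distinct ends of $T$, those are separated in $T$ by the removal of a single vertex $s$, and the $f$-images of infinitely many pairwise disjoint $R_1$--$R_2$ paths in $G$ must each come within a bounded distance of $s$; pulling this back yields infinitely many vertices of $G$, one on each of these disjoint paths, that are pairwise at bounded distance --- impossible in a bounded-degree graph. Either way \g is not quasi-isometric to a tree.

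I do not anticipate a genuine difficulty here: \Cr{full Halin VT} really is a corollary, and the only implication deserving a line of proof is ``thick end $\Rightarrow$ not quasi-isometric to a tree''. That implication is in any case implicit in the characterisation of graphs quasi-isometric to trees used in the proof of \Tr{full Halin VT general} (cf.\ \cite{KroMolQua}) and could simply be cited; the rest is bookkeeping.
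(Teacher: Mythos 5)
Your argument is essentially the paper's proof: both deduce the corollary from Theorem~\ref{full Halin VT general} by citing \cite[Proposition 5.6]{Thomassen-Hadwiger} for the thick end and then invoking that a graph with a thick end cannot be quasi-isometric to a tree. One small caveat in your expansion of that last fact: $f(R_i)$ need not be a quasi-geodesic, since the $R_i$ are merely disjoint rays (not necessarily geodesics), so the Morse lemma does not directly apply; however a bounded-jump sequence in a tree that leaves every ball still determines a unique end, so the step repairs easily --- or, as you yourself note, the implication can simply be cited.
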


Another corollary is that the existence of a half-grid subdivision implies the existence of a $\mathbb H$ subdivision:

\begin{corollary}\label{half grid means full grid}
Every  \lf, \qt\ graph that contains a subdivision of the half-grid contains a subdivision of the full grid $\mathbb{H}$.
\end{corollary}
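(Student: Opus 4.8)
The plan is to derive the corollary directly from Theorem~\ref{full Halin VT general}: it suffices to show that a \lf, \qt\ graph $G$ containing a subdivision of the half-grid cannot be \qi\ to a tree, since then Theorem~\ref{full Halin VT general} immediately produces a subdivision of $\mathbb H$ in $G$.

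The first step is to observe that such a $G$ has a thick end. Write the half-grid as the union of its pairwise disjoint columns $V_0,V_1,V_2,\dots$, which are rays, together with its rows; any two consecutive columns $V_i,V_{i+1}$ are joined by infinitely many pairwise disjoint rows, so all the $V_i$ lie in the (unique) end of the half-grid. If $\phi$ is a subdivision of the half-grid in $G$, then the $\phi(V_i)$ are pairwise disjoint rays of $G$, while the subpaths of the images of the rows running from $\phi(V_i)$ to $\phi(V_{i+1})$ provide infinitely many pairwise disjoint paths between them; hence all $\phi(V_i)$ belong to a common end $\omega$ of $G$, and as there are infinitely many of them and they are pairwise disjoint, $\omega$ is thick.

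The second step is that a \lf\ graph with a thick end is not \qi\ to a tree; this is the implication already used implicitly in the introduction (and recorded parenthetically in the abstract). It is standard, but if one wants to spell it out: a quasi-isometry $f\colon G\to T$ onto a tree carries the pairwise disjoint rays of a thick end to quasi-geodesic rays of $T$, each of which stays within a bounded Hausdorff distance $D$ (depending only on the quasi-isometry constants) of a genuine ray of $T$; since all these rays represent the single end of $T$ corresponding to $\omega$, they eventually share a common tail, so after pulling back, infinitely many of the disjoint rays would have to pass through a fixed ball of $G$, which is finite as $G$ is \lf\ --- a contradiction. Combining the two steps, $G$ is a \lf, \qt\ graph that is not \qi\ to a tree, so Theorem~\ref{full Halin VT general} applies and yields the required subdivision of $\mathbb H$.

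The only point that genuinely needs attention is this last implication (equivalently, the quasi-isometry invariance of having a thick end); everything else is routine book-keeping with the subdivision. In the final write-up I would either include the short argument sketched above or simply cite it.
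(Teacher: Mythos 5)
Your overall strategy matches the paper's: both derive the corollary from Theorem~\ref{full Halin VT general} by showing that a \lf\ (hence bounded-degree) \qt\ graph containing a half-grid subdivision cannot be \qi\ to a tree, and your factoring of this through the intermediate observation that such a graph has a thick end is fine. The paper records the key step as a self-contained lemma (a bounded-degree graph \qi\ to a tree contains no subdivision of the half-grid) and proves it directly.

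However, your sketch of that key step has a genuine gap. You assert that a quasi-isometry $f\colon G\to T$ carries the pairwise disjoint rays of a thick end to quasi-geodesic rays of~$T$, each within bounded Hausdorff distance of a genuine ray. This fails in general: a ray $r_0r_1r_2\ldots$ in a thick end need not be a quasi-geodesic of~$G$ --- the graph distance $d_G(r_0,r_i)$ may grow arbitrarily slowly in~$i$ --- so its $f$-image need not be a quasi-geodesic of~$T$, nor need it stay within bounded Hausdorff distance of a genuine ray of~$T$. It is also not clear that a thick end always contains infinitely many pairwise disjoint \emph{geodesic} rays, so one cannot simply pass to geodesics. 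The paper's lemma avoids this issue entirely: using only that the rays of the half-grid cannot be separated by a finite vertex set, it finds a single ray $\rho$ in~$T$ such that, for all but finitely many vertices $t$ of~$\rho$, the $f$-image of each half-grid ray meets the ball $B_T(t,\gamma+c)$; and since $G$ has bounded degree, the $f$-preimage of such a ball has bounded size, so only boundedly many pairwise disjoint rays can each contribute a vertex to it --- contradicting the existence of infinitely many disjoint half-grid rays. The statement you invoke (a \lf\ graph with a thick end is not \qi\ to a tree) is true, but a correct proof needs an argument of this kind rather than the quasi-geodesic shortcut.
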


This follows immediately by combining \Tr{full Halin VT general} with the following: 

\begin{lemma}
Let \g be a bounded degree graph which is quasi-isometric to a tree. Then \g contains no subdivision of the half-grid.
\end{lemma}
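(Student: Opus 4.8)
The plan is to derive a contradiction from the assumption that a bounded degree graph $G$ which is quasi-isometric to a tree contains a subdivision of the half-grid. The key quantitative obstruction is that the half-grid (hence any subdivision of it) contains, for each $n$, a large set of vertices that pairwise diverge only slowly: concretely, the ``$n$-th column'' of the half-grid has $n+1$ vertices, all within distance $n$ of one another, yet it separates the part of the half-grid below row $n$ from everything above it, so it is a ``fat'' separator that cannot be replaced by few vertices. A quasi-isometry to a tree must destroy such structure, because trees have no large sets of pairwise-close vertices that are also collectively essential as separators; more precisely, I will use the well-known fact that a bounded degree graph is quasi-isometric to a tree if and only if it has \emph{bounded tree-width of balls}, or equivalently satisfies a thin-cylinder / linear isoperimetric-type condition, or (most convenient here) has the property that balls of radius $r$ can be separated from their complement by uniformly-bounded-size sets after removing at most a bounded number of ``fat ends''.

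Concretely, I would proceed as follows. First, fix the half-grid subdivision $S \subseteq G$ and let $f\colon V(G) \to V(T)$ be the given $(\gamma,c)$-quasi-isometry. Second, for each $n$ consider the subpath $P_n$ of $S$ that is the subdivided $n$-th horizontal row together with the subdivided vertical edges connecting row $n$ to row $n+1$; because $G$ has bounded degree and $S$ is a subdivision, the $G$-distance between any two branch vertices lying in a bounded-diameter region of $S$ is comparable (up to multiplicative and additive constants depending only on the maximum degree) to their $S$-distance only from below, but distances can shrink a lot in $G$. The correct invariant to exploit is therefore not distances but the combinatorial fact that $S$ contains arbitrarily large families of pairwise disjoint double rays (the vertical lines), which is exactly the statement that $S$, and hence $G$, has a thick end. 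Thus the cleanest route is: a graph quasi-isometric to a tree has no thick end (this is \cite[Theorem 5.5]{KroMolQua}, or can be seen directly since ends of a tree are ``thin''), whereas a subdivision of the half-grid has a thick end (the vertical lines give infinitely many pairwise disjoint rays no two of which are separated by finitely many vertices), and a thick end of $S \subseteq G$ yields a thick end of $G$ because the infinitely many disjoint rays of $S$ are infinitely many disjoint rays of $G$ that remain pairwise unseparable by finite sets in $G$ (separating them in $G$ would separate them in $S$). This is an immediate contradiction.

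So the proof is essentially: (i) observe the half-grid, and hence any subdivision of it, has a thick end; (ii) observe that having a thick end is inherited by any supergraph, so $G$ has a thick end; (iii) invoke the known fact that a bounded degree graph quasi-isometric to a tree has only thin ends, contradiction. Step (i) needs a short argument that the vertical rays of the half-grid are pairwise equivalent and there are infinitely many pairwise disjoint ones, and that no finite vertex set separates two of them — this is standard since deleting $k$ vertices leaves all but finitely many vertical rays intact and still connected to each other along surviving horizontal rows. Step (iii) is the only place an external black box is used; I would cite the characterisation that quasi-isometry-invariance of ``being a tree'' forces ends to have bounded ``thickness'' zero in the sense relevant here, e.g.\ via \cite{KroMolQua}, or alternatively note trees have no thick ends and thickness of ends is a quasi-isometry invariant for bounded-degree graphs.

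The main obstacle is making step (iii) precise with the right citation: ``thick end'' is a coarse notion, and one must be slightly careful that ``quasi-isometric to a tree implies no thick end'' is exactly what is recorded in the literature (it is — e.g.\ \cite[Theorem 5.5]{KroMolQua} as already used in the proof of Theorem~\ref{full Halin VT general}, whose contrapositive says a \qt\ \lf\ graph with no thick end is \qi\ to a tree; but here $G$ need not be \qt, so one instead uses the more basic fact that trees have no thick ends and the number of disjoint rays in an end can only shrink under a quasi-isometry of bounded-degree graphs). A clean way around any \qt\ hypothesis: a $(\gamma,c)$-quasi-isometry sends $k$ pairwise ``far-apart-at-infinity'' disjoint rays to $k$ rays in the tree that still cannot be pairwise separated by a bounded set, but a tree has arbitrarily small separators between any two distinct ends, so $k$ must be bounded; hence $G$'s ends are thin. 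I will spell this last implication out as the heart of the argument.
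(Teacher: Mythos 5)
Your overall strategy---reduce to the claim that a bounded-degree graph quasi-isometric to a tree has no thick end, and observe that a subdivision of the half-grid witnesses a thick end---is the same obstruction the paper exploits, just packaged through the ``thick end'' abstraction rather than worked out directly on the half-grid rays. You also correctly flag that quoting \cite[Theorem 5.5]{KroMolQua} would be a misuse here, since that theorem assumes quasi-transitivity, which $G$ need not have.

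The gap is in the step you yourself call ``the heart of the argument''. You claim the quasi-isometry sends $k$ pairwise disjoint rays of $G$ to $k$ coarse rays in $T$ that cannot be pairwise separated by bounded sets, and then conclude ``but a tree has arbitrarily small separators between any two distinct ends, so $k$ must be bounded''. This does not close the argument: precisely because the $k$ rays of $G$ lie in a single end of $G$, their images converge to the \emph{same} end of $T$ (if two went to distinct ends of $T$, a separating vertex of $T$ would pull back, via the lower quasi-isometry inequality and bounded degree, to a finite separator in $G$, contradicting equivalence). Small separators between \emph{distinct} ends of $T$ therefore say nothing about $k$, and a tree has no upper bound on the number of coarse rays converging to a single end. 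Moreover, a thick end only supplies pairwise \emph{disjoint} rays, not pairwise diverging ones, so ``far-apart-at-infinity'' is not available to you. The missing quantitative step is exactly the ball-counting argument in the paper: fix a geodesic ray $R$ in $T$ to the common end; since $T$ is a tree, every vertex $t$ far enough along $R$ separates the start of each image-ray from the end, so each image-ray has a point within distance $\gamma+c$ of $t$; but by the lower quasi-isometry bound and bounded degree of $G$, the $f$-preimage of the $(\gamma+c)$-ball around $t$ has uniformly bounded size, hence meets only boundedly many pairwise disjoint $G$-rays. Without this step your sketch does not conclude.
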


\begin{proof}
Let  $f$ be a $(\gamma,c)$-quasi-isometry from~$G$ to a tree~$T$. 
Since \g has bounded degrees, for every $r\in\N$ there exists $b(r)\in\N$ such that every vertex set of \g of diameter at most~$r$ contains at most $b(r)$ vertices.
In particular, for every vertex $t$ of~$T$, there is only a bounded number of vertices of~$G$ mapped to~$t$. Thus there exists an upper bound on the number of disjoint rays that contain vertices mapped to a common vertex of~$T$.

Suppose \g contains a  subdivision of the half-grid. Recall that no two rays of the half-grid can be separated by a finite vertex set. This implies that there exists a ray $R$ in~$T$ such that the $f$-images of any ray $P$ in the half grid intersect the $(\gamma+c)$-ball around a tail of~$R$ infinitely often. Moreover, all but finitely many vertices of~$R$ will have the $f$-image of a vertex of~$P$ in distance at most $\gamma+c$.

Since each ball of radius $\gamma+c$ around any vertex of~$R$ contains the images of only a bounded number of vertices of \G, the above remarks contradict the fact that the half grid contains infinitely many pairwise disjoint rays.
\end{proof}

\section{Diverging subdivisions of $\mathbb H$} \label{sec div}

Halin's theorem provides a subdivision of the half-grid $H$  in any thick-ended graph \G, but allows the metric of $H$ to be arbitrarily distorted by that of \G. This is unsatisfactory in contexts where the geometry matters, e.g.\ in geometric group theory. In this section we seek mild conditions under which we can improve on Halin's theorem in order to preserve some of the geometry of $H$.
\medskip

Let $H,G$ be graphs, and let $f\colon H \to G$ be a subdivision of $H$ in $G$.
As above, we say that $f$ is \defi{diverging}, if \fe\ two sequences \seq{x},\seq{y}\ of points of $H$ \st\ $d_H(x_i,y_i) \to \infty$, we have $d_G(f(x_i),f(y_i)) \to \infty$.  For example, if $f$ is a quasi-isometry, or more generally a coarse embedding, then $f$ is diverging. Notice however that this does not mean that if $H$ is a graph coarsely embeddable in another graph \G, then \g contains a diverging subdivision of $H$; for example, there is no diverging subdivision of the square grid into $\mathbb{H}$, because the latter has no vertex of degree four. Our next result says that specifically for $\mathbb{H}$,  a coarse embedding implies a diverging subdivision.

\begin{proposition} \label{qi preserve div}
The property of containing a diverging subdivision of~$\mathbb{H}$ is preserved by coarse embeddings between bounded-degree graphs.
\end{proposition}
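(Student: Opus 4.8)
Suppose $G, G'$ are bounded-degree graphs, $\phi\colon G \to G'$ is a coarse embedding (say with gauge functions $\rho_-,\rho_+$), and $G$ contains a diverging subdivision $f\colon \mathbb{H} \to G$. The goal is to produce a diverging subdivision $g\colon \mathbb{H} \to G'$. The naive idea — compose $\phi\circ f$ — fails because $\phi$ need not send subdivided paths to paths, and more seriously because $\phi$ need not be injective, so the vertex images may collide and the arcs may overlap badly. So the plan is to use $\phi\circ f$ only as a \emph{guide}: it gives us, for every vertex $v$ of $\mathbb{H}$, a target point $\phi(f(v))\in V(G')$, and for every edge $xy$ of $\mathbb{H}$, the $f$-path $P_{xy}$ in $G$ traces out (under $\phi$, up to bounded jumps, since $\phi$ moves neighbours a bounded distance) a bounded-width "tube" in $G'$ joining $\phi(f(x))$ to $\phi(f(y))$. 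Inside each such tube one can choose an actual path of $G'$; the real work is to make these chosen paths internally disjoint so that they assemble into a genuine subdivision, and to verify that the result is still diverging.

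The cleanest way to organise this is to first replace $\mathbb{H}$ by a suitable \emph{dilated} copy of itself: let $\mathbb{H}^{(m)}$ be the graph obtained from $\mathbb{H}$ by subdividing every edge $m-1$ times, so $\mathbb{H}^{(m)}$ is isomorphic to $\mathbb{H}$ for every $m$. A diverging subdivision of $\mathbb{H}^{(m)}$ in $G'$ is the same thing as a diverging subdivision of $\mathbb{H}$ in $G'$. Now work along the branch vertices of $\mathbb{H}^{(m)}$, i.e. the images of the original vertices of $\mathbb{H}$, listed in some order $v_1, v_2, \ldots$; and process them one at a time. When we come to handle the edges of $\mathbb{H}^{(m)}$ incident with $v_i$, only finitely many of them remain unrouted (bounded degree $3$ in $\mathbb{H}$, hence bounded in the dilation after we pick which original edge it lies on), and the previously routed paths occupy only a compact portion of $G'$; we need to route the new path inside the $\phi$-tube while avoiding a finite forbidden set. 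Here we exploit that each $\phi$-tube is genuinely "fat": the $f$-path $P_{xy}$ in $G$ is long — of length at least $\rho_-^{-1}$-ish of $d_{\mathbb{H}}(x,y)=m$ under $\phi$, so by choosing $m$ large we can make the tube arbitrarily long — and because $f$ is a diverging subdivision, far-apart points of $P_{xy}$ map under $\phi$ to far-apart points of $G'$. This lets the tube "dodge" any bounded obstruction: a path that merely needs to avoid a set of bounded diameter can be rerouted through a part of the tube far from that set. Making this dodging argument precise — that a sufficiently long diverging tube contains a path from one end to the other avoiding any prescribed ball — is the technical heart.

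The key steps, in order: (1) fix constants; for each edge $xy$ of a dilated $\mathbb{H}^{(m)}$ with $m$ chosen large, define the tube $T_{xy}\subseteq V(G')$ as a bounded neighbourhood of $\phi(f(P_{xy}))$, check it is connected and contains $\phi(f(x)),\phi(f(y))$, and record a "length function" on it inherited from $P_{xy}$ that is diverging (far in this parameter $\Rightarrow$ far in $d_{G'}$); (2) the dodging lemma: for any finite $S\subseteq V(G')$ there is an $N=N(S)$ so that if the tube-length between $\phi(f(x))$ and a sub-segment exceeds $N$ then there is a path inside $T_{xy}$ from $\phi(f(x))$ to $\phi(f(y))$ disjoint from $S$ except possibly near its two (fixed) endpoints — handled by passing to a long middle portion far from $S$; (3) a greedy/transfinite routing along $v_1,v_2,\ldots$, at each stage routing the finitely many new incident edge-paths avoiding the (finite) union of already-routed paths, using (2) and choosing $m$ once and for all large enough that step (3) never gets stuck (one must check the required length bound $N$ stays bounded, which it does because at each stage the obstruction is a single bounded-diameter blob — this uses that $G$, hence $G'$, has bounded degree so balls are finite with controlled size); and (4) verify the assembled map $g$ is a subdivision (internally disjoint paths, correct incidences) and is diverging: two points of $\mathbb{H}^{(m)}$ far apart lie on edges whose branch vertices are far apart in $\mathbb{H}$, hence $f$-images far apart in $G$ (as $f$ diverges), hence $\phi$-images far apart in $G'$; a short computation with $\rho_-$ and the bounded tube width finishes it, with the within-a-single-tube case handled by the diverging length function from step (1).

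I expect the main obstacle to be step (3) combined with step (2): guaranteeing that the greedy routing never fails requires a \emph{uniform} bound on how much tube-length is "consumed" by dodging past the union of all previously routed paths. A priori that union grows without bound, so one must argue that what actually matters at stage $i$ is only the obstruction local to the tubes currently being routed — the previously routed paths that come near tube $T_{xy}$ form a bounded set because $T_{xy}$ itself has bounded width and the $\mathbb{H}$-structure has bounded degree — and that $m$ can be fixed in advance to beat the worst such local obstruction. Phrasing the induction so that this locality is visibly maintained (e.g. routing $T_{xy}$ to stay within, say, distance $1$ of $\phi(f(P_{xy}))$ except in controlled "detour zones", so that only tubes of $\mathbb{H}$-neighbours can interfere) is the delicate bookkeeping; once that invariant is set up correctly the rest is routine.
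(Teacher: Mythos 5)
Your approach is genuinely different from the paper's, but it has a real gap in step~(2), the ``dodging lemma.'' The tube $T_{xy}$ you define is a bounded-radius neighbourhood of the $\varphi$-image of a single path in a bounded-degree graph, so at any point along its length its ``cross-section'' has bounded size. A finite obstruction $S$ of bounded diameter sitting squarely in the middle of the tube can therefore separate its two ends, and taking $m$ larger does not help: a long one-dimensional corridor does not let you route \emph{around} a transverse blockage. Divergence of $f$ tells you that far-apart points of $P_{xy}$ have far-apart $\varphi$-images; it says nothing about the tube being wide enough to dodge. You also acknowledge but do not establish the locality invariant needed for step~(3) (``what matters at stage $i$ is only the obstruction local to the tubes currently being routed''), and that is precisely the crux. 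A further unaddressed issue is that $\varphi$ need not be injective, so without further care distinct branch vertices could have colliding $\varphi\circ f$-images.

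The paper sidesteps both problems by making one global choice instead of routing greedily. It fixes $L$ with $d_H(\varphi(x),\varphi(y))\le L$ for adjacent $x,y$, then $K$ so that $d_H(\varphi(x),\varphi(y))\le 2L$ forces $d_G(x,y)\le K$, then $M$ (via divergence of $f$) so that $d_{\mathbb H}(a,b)>M$ forces $d_G(f(a),f(b))>K$, and finally re-subdivides $\mathbb H$ inside $f(\mathbb H)$ to get a diverging subdivision $g$ whose branch vertices are pairwise at $G$-distance $>2K$. For each edge $uv$ of $\mathbb H$ it concatenates shortest $H$-paths $P_{xy}$ (length $\le L$) between $\varphi$-images of consecutive $G$-vertices along $g(uv)$ into a walk $Q_{uv}$. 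The decisive observation is that walks $Q_{uv}$ and $Q_{u'v'}$ for edges without a common endpoint \emph{cannot} meet at all: a common vertex would force some $P_{xy}$ and $P_{x'y'}$ to intersect, hence $d_H(\varphi(x),\varphi(x'))\le 2L$, hence $d_G(x,x')\le K$, contradicting the spacing combined with the choice of~$M$. So there is no obstruction to dodge and no induction to set up: the $Q_{uv}$'s, split along the bipartition classes of $\mathbb H$, directly give branch sets of an $\mathbb H$-minor, hence (since $\Delta(\mathbb H)=3$) a subdivision, and divergence of the result follows because it is a composition of diverging maps. If you replace your dodging lemma with this non-interference observation, your greedy routing becomes vacuous; at that point you are essentially reconstructing the paper's argument.
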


In particular, if $G=Cay(\Gamma, S)$ is a finitely generated \Cg, then its containing a diverging subdivision of $\mathbb{H}$ is independent of the choice of the generating set $S$. 

\begin{proof}
Let $\varphi\colon V(G)\to V(H)$ be a coarse embedding between two graphs of bounded degree and let $f\colon \mathbb{H}\to G$ be a diverging subdivision of~$\mathbb{H}$ in~$G$.
Then there are $L\geq 0$ and $K\geq 0$ such that $d(\varphi(x),\varphi(y))\leq L$ for all adjacent $x,y\in V(G)$ and $d(x,y)\leq K$ for all $x,y\in V(G)$ with $d(\varphi(x),\varphi(y))\leq 2L$.
Since $f$ is diverging, there exists $M>0$ such that $d(f(x),f(y))>K$ for all points $x$ and $y$ in~$\mathbb H$ with $d(x,y)>M$.
Let $g\colon \mathbb{H}\to f(\mathbb{H})$ be another subdivision of~$\mathbb{H}$ in~$G$ such that $d(g(x),g(y))>2K$ for all $x,y\in V(\mathbb{H})$.
Then $g$ is diverging, since $f$ is.
We claim that $\varphi(g(\mathbb{H}))$ contains a minor of~$\mathbb{H}$ in~$H$ and thus $H$ contains $\mathbb{H}$ as a subdivision.

For all adjacent $x,y\in V(g(\mathbb{H}))$, let $P_{xy}$ be a shortest $\varphi(x)$-$\varphi(y)$ path in~$H$ with $P_{xy}=P_{yx}$.
Its length is at most~$L$.
Let $H'$ be the subgraph of~$H$ induced by $\varphi(g(\mathbb{H}))$ and all paths $P_{xy}$.
For adjacent $u,v\in V(\mathbb{H})$, let $Q_{uv}$ be the $\varphi(g(u))$-$\varphi(g(v))$ walk in~$H'$ that is induced by the paths $P_{xy}$ for adjacent vertices $x,y$ on the $g$-image of $uv$ in~$G$.
If distinct $Q_{uv}$ and $Q_{u'v'}$ share a vertex, there are paths $P_{xy}$ and $P_{x'y'}$ on $Q_{uv}$ and $Q_{u'v'}$, respectively, that have a common vertex.
Thus, we have $d(\varphi(x),\varphi(x'))\leq 2L$ and hence $d(x,x')\leq K$ by the choice of~$K$.
By definition of~$M$ and~$g$, the sets $\{u,v\}$ and $\{u',v'\}$ have a common vertex.

Let $\{U,V\}$ be the canonical bipartition of~$\mathbb{H}$.
For every $u\in U$, set
\begin{align*}
V_u:=\bigcup\{&V(P_{xy})\mid v\in N(u), x,y\in g(uv), \text{ and }\\&d(g(u),g(x))\leq K\text{ and }d(g(u),g(y))\leq K\}.
\end{align*}
For adjacent $u\in U$ and $v\in V$, we denote by $Q'_{vu}$ the subwalk of~$Q_{uv}$ with $\varphi(g(v))$ as one end vertex and the vertex before the first vertex of~$V_u$ as its other end vertex.
For $v\in V$, set
\[
V_v:=\bigcup\{V(Q'_{vu})\mid u\in N(v)\}.
\]
Our arguments above show that the elements of $\{V_u,V_v\mid u\in U, v\in V\}$ are pairwise disjoint.
Thus, they form branch sets of a minor of~$\mathbb{H}$ in~$H$.

Since compositions of diverging functions are diverging, we deduce that the subdivision of~$\mathbb{H}$ we just constructed is diverging.
\end{proof}

We cannot drop the condition that the subdivision of $\mathbb{H}$ be diverging in \Prr{qi preserve div}. To see this, let $H'$ denote a half-lattice, e.g.\ the intersection of $\mathbb{H}$ with the half-plane $\{x,y\in \R^2 \mid y\geq 0\}$. Let $H''$ be a copy of $H'$, and join each pair of corresponding vertices of $H'$ and $H''$ by an edge to define a new graph~$G$. Then \G\ is quasi-isometric to $H'$, and contains a full lattice while $H'$ does not.

\begin{rem}\label{rem planar H diverging}
Let \g be a plane graph.
Since $\mathbb H$ is 3-connected, it has a unique embedding into the plane by Whitney's theorem~\cite{whitney_congruent_1932}.
It follows easily from this that every subdivision of~$\mathbb H$ in~\g is diverging.
\end{rem}

An 1-ended \lf\ graph can fail to contain a diverging subdivision of $\mathbb{H}$ even if it has a $K_\infty$ minor, as shown by the above example of a  `two-storey' half-grid. This, combined with Remark~\ref{rem planar H diverging},  motivates
\begin{problem} \label{prob diverging H}
Let \g be a (non-planar) \lf, 1-ended, \vt\ (or \qt) graph. Must \g contain a diverging subdivision of $\mathbb{H}$?
\end{problem}

We cannot ask for a coarse embedding of $\mathbb{H}$ here instead of a diverging one: a coarse embedding of $\mathbb{H}$ in \g implies that \g has asymptotic dimension at least 2, but the lamplighter graph over $\Z$ has asymptotic dimension 1 \cite{BrDyLaAss}. This suggests that diverging subdivisions might be just the right notion if one wants a geometric variant of Halin's theorem for groups.

\medskip
Perhaps we can drop the vertex-transitivity assumption in \Prb{prob diverging H} if we are happy with a diverging subdivision of half of $\mathbb{H}$:

\begin{problem} \label{prob diverging H bd}
Let \g be an 1-ended graph with a thick end and bounded degrees. Must \g contain a diverging subdivision of the half-grid?
\end{problem}

A positive answer to \Prb{prob diverging H} or \Prb{prob diverging H bd} would immediately yield a positive answer to
\begin{conjecture} \label{conj rays}
Let \g be a bounded-degree (vertex-transitive) graph. Suppose \g contains an infinite family of pairwise disjoint rays. Then \g contains an infinite family of pairwise diverging rays.
\end{conjecture}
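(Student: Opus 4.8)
The plan is to split according to whether $G$ has a thick end. The case where $G$ has \emph{no} thick end is easy and can be settled outright: then every end of $G$ is thin, i.e.\ contains only finitely many pairwise disjoint rays, so finitely many ends would account for only finitely many pairwise disjoint rays in total; hence $G$ has infinitely many ends. Pick rays $R_1,R_2,\dots$ lying in pairwise distinct ends. If $R_i$ and $R_j$ lie in distinct ends, some finite vertex set $S$ separates their tails, so every $R_i$--$R_j$ path of $G$ meets $S$; hence for $x$ on the tail of $R_i$ and $y$ on the tail of $R_j$ we have $d_G(x,y)\ge d_G(x,S)$, and since $G$ is locally finite this tends to infinity as $x$ runs along $R_i$. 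Thus any two of the $R_i$ diverge, and we are done in this case. (Note this uses neither bounded degree beyond local finiteness nor vertex-transitivity.)

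Everything therefore hinges on the case where $G$ has a thick end $\omega$. Here I would try to produce, inside $\omega$, an infinite family of pairwise diverging rays, upgrading the diverging \emph{pair} supplied by \Lr{diverging rays} to a full diverging ``fan''. Two approaches seem plausible. One is to work directly with the infinitely many pairwise disjoint rays that $\omega$ contains and spread them out metrically; these rays need not themselves diverge, as the rows of $\mathbb H$ show, so some new input is needed to extract divergence from a thick end. The other is to assume $G$ is accessible (true for vertex-transitive $G$ at least when $G$ has no $K_\infty$ minor) and mimic the structure of the proof of \Tr{full Halin VT general}: apply an $\Aut(G)$-invariant tree decomposition of finite adhesion (\Tr{CHM}, or \Tr{thm_EGL} in the $K_\infty$-minor-free case) to locate a part in which $\omega$ lives whose torso $H_t$ is one-ended, quasi-transitive and has a thick end; invoke a positive answer to Problem~\ref{prob diverging H} to get a diverging subdivision of $\mathbb H$ in $H_t$ (in particular infinitely many pairwise diverging rays in $H_t$), or a positive answer to Problem~\ref{prob diverging H bd} after forgetting quasi-transitivity; and finally pull these rays back to $G$, using that the adhesion sets are finite so that rays of $\omega$ restrict to rays in $H_t$ and, conversely, a diverging family in $H_t$ can be rerouted through $G$ at the cost of only a bounded additive distortion, hence stays diverging.

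The hard part is of course this second case, and it is essentially the content of the open Problems~\ref{prob diverging H} and~\ref{prob diverging H bd}: neither Halin's theorem nor \Lr{full H planar} produces a \emph{diverging} subdivision of $\mathbb H$, so obtaining even a single diverging fan inside a thick end requires a genuinely new idea. A secondary difficulty, relevant only for the fully general bounded-degree statement rather than the vertex-transitive one, is that the reduction to the one-ended case in the second approach needs $G$ to be accessible; for vertex-transitive $G$ this should be arrangeable, with the non-accessible ($K_\infty$-minor) situation handled separately, for instance by reading off a diverging fan directly from the branch sets of a $K_\infty$ minor.
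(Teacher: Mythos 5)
The statement you are attempting is a \emph{conjecture} in the paper, not a theorem: the authors offer no proof of \Cnr{conj rays} and explicitly present it as open. Your proposal correctly reflects this. Your reduction of the conjecture to the thick-end case is valid and is a genuine (if modest) contribution not spelled out in the paper: if every end of $G$ is thin, then since a thin end carries only finitely many pairwise disjoint rays and finitely many thin ends would account for only finitely many in total, $G$ must have infinitely many ends; one ray per end gives a pairwise diverging family, because two rays in distinct ends are separated by a finite set $S$ after passing to tails, and local finiteness makes $d(\cdot,S)\to\infty$ along each tail. (As you observe, only local finiteness is used there.) Your identification of the remaining difficulty with \Prb{prob diverging H} and \Prb{prob diverging H bd} also matches exactly the paper's remark immediately before the conjecture that a positive answer to either problem would settle it.

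What you omit, and what the paper records, is that the most natural attack on the thick-end case has recently been closed: the proof of \Lr{diverging rays} rests on a coarse Menger-type theorem for \emph{two} paths, and it had been conjectured that this generalizes to any number of paths, which together with that proof would yield \Cnr{conj rays}; Nguyen, Scott and Seymour have since given a counterexample to that generalization, so this route fails. Your alternative route (canonical tree-decomposition, locate the thick end in a quasi-transitive torso, invoke a hoped-for answer to \Prb{prob diverging H} or \Prb{prob diverging H bd}, and transfer back along finite adhesion sets) is a sound conditional reduction, but, as you yourself flag, those problems are open. So the proposal is not a proof and there is no proof in the paper to compare it against; what you have is a correct reduction to the open core of the problem, and an accurate account of why it is open.
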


Recall that \Lr{diverging rays} yields just two diverging rays in such a \G. Its proof method relies on a geometric version of Menger's theorem, proved for pairs of paths in \cite{AHJKW,GeoPapMin}. It was conjectured that the statement generalises to any number of paths, and this conjecture, combined with the proof of \Lr{diverging rays} would imply \Cnr{conj rays}. However, Nguyen, Scott \& Seymour~\cite{NSS-CounterExCoarseMenger} recently gave a counterexample to said conjecture.

For \nin, let $R_n$ be the graph consisting of $n$ rays with a common starting vertex and no other common vertices. 
If \g is vertex-transitive and 1-ended, then it is not hard to prove that \ti\ a quasi-isometrically embedded copy of $R_3$ in \G. In particular, \ti\ a triple of pairwise diverging rays. Panos Papasoglu (private communication) has asked the following: 

\begin{question} \label{Q pap}
Let \g be an 1-ended vertex-transitive graph. Must \g contain a quasi-isometrically embedded copy of $R_4$?
\end{question}

We do not even know how to find a quadruple of pairwise diverging rays in such \G.

Recall that in the proof of \Lr{full H planar} we were able to exploit planarity to construct an infinite family of pairwise disjoint rays. This motivates the following problem.

\begin{problem} \label{prob codegree}
Let \g be an 1-ended, finitely presented, \Cg. Must \g have a planar subgraph $H$ with bounded co-degree and a thick end? Can we choose this $H$ to be coarsely embedded in \G?
\end{problem}
The same question can be asked for a large-scale-simply-connected\footnote{We say that \g is  \defi{large-scale-simply-connected} if its fundamental group is generated by cycles of bounded length.} graph \g with bounded degree and a thick end.

\section{Well-quasi-ordering \Cg s} \label{sec wqo}

A well-known conjecture of Thomas \cite{ThoWel} postulates that the countable graphs are well-quasi-ordered under the minor relation. The analogous statement for finite graphs is the celebrated Graph Minor Theorem of Robertson \& Seymour \cite{GMXX}. Thomas' conjecture may well be false in this generality, and in any case it is very difficult. But the following may be within reach:

\begin{conjecture} \label{conj wqo}
The countable Cayley graphs are well-quasi-ordered under the minor relation. 
\end{conjecture}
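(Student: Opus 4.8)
\emph{Proof strategy.}
The plan is to reproduce, at the level of \qt\ graphs, the architecture of the proof of the Graph Minor Theorem: decompose every graph canonically into a bounded-adhesion tree of building blocks, show that the building blocks are well-quasi-ordered, and lift this to the whole class by an infinite-tree argument. Throughout I would first restrict to finitely generated groups, so that the Cayley graphs in question are \lf; the reduction of the general countable case to this one (or a parallel treatment of Cayley graphs of countably generated groups with a fixed generating set, where infinite vertex degrees must be accommodated) is a separate matter that I expect to need its own ideas, and I would leave it until the end.

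\emph{Step 1: canonical decomposition.} It is convenient to argue inside the class $\mathcal G$ of connected \lf\ \qt\ graphs, which contains all \lf\ Cayley graphs and is stable under the decompositions below. Given an infinite sequence in $\mathcal G$, it suffices to produce a good pair after passing to an infinite subsequence, so we may assume either that every term is \qi\ to a tree, or that every term has a $K_\infty$ minor, or that no term has a $K_\infty$ minor. In the last case \Tr{thm_EGL} together with accessibility \cite{DunPla} produces an $\Aut(G)$-invariant bounded-adhesion \td\ with \qt\ torsos that are planar or of bounded treewidth; iterating along the ends of the decomposition tree by means of \Tr{CHM} and \Lr{EGL 3.12} one obtains a \emph{canonical} bounded-adhesion \td\ whose torsos are finite, $2$-ended, or $1$-ended \qt\ graphs. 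The crucial property to record is that this decomposition is functorial: determined by the graph, isomorphism-invariant, and compatible with taking minors of the torsos.

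\emph{Step 2: the building blocks are well-quasi-ordered.} Finite torsos: the Graph Minor Theorem \cite{GMXX}. Graphs in $\mathcal G$ that are \qi\ to a tree, and $2$-ended \qt\ torsos, have bounded treewidth \cite{KroMolQua} and hence are even better-quasi-ordered under minors, by Thomas's theorem that bounded-treewidth graphs are bqo. For $1$-ended \qt\ torsos one invokes the main result of this paper: by \Tr{full Halin VT general}, Kr\"on's bounded co-degree theorem \cite{KroInf}, and the theorem of Thomassen behind \Prr{prop just two}, every such graph is a minor-twin of $\mathbb H$ or of $\Z^3$; in particular there are only finitely many minor-classes among them, so this class is trivially bqo. (Should the Thomassen-type classification be available only for vertex-transitive torsos, one first passes to a \qt\ supergraph by adjoining finitely many orbits of shortest paths, as in the proof of \Tr{full H planar 2}, and checks that the minor-class is unchanged.) The $K_\infty$-minor case I would handle by seeking an analogous structure theorem for such ``large'' \qt\ graphs; I regard establishing this as part of the work still outstanding.

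\emph{Step 3 and the main obstacle.} It remains to lift bqo of building blocks to bqo — hence wqo — of all of $\mathcal G$. At the combinatorial level this is the Robertson--Seymour assembly step: one labels each decomposition tree by its torsos together with the positions of their adhesion sets, and applies Nash--Williams' theorem that infinite trees labelled by a bqo are bqo; since the decomposition tree of a \qt\ graph may be an arbitrary \qt\ tree with complicated end structure, this is exactly the infinite setting that Nash--Williams' theorem covers. The hard part — and the step I expect to be the real obstacle — is making this assembly produce a genuine minor of the host graph: an embedding of decomposition trees together with torso-by-torso minor embeddings does not automatically glue, because the canonical decompositions of two graphs need not be aligned, and a branch-set chosen inside one torso may straddle several adhesion sets. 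One must therefore show that the torso-wise minor embeddings can be chosen, or corrected, so as to respect how the adhesion sets sit inside the torsos and thus agree on the overlaps dictated by the tree-embedding; this is where the rigidity of the canonical ($\Aut$-invariant, minor-robust) decomposition has to be exploited far more carefully than in the finite case. Once this gluing is under control, the two loose ends noted above — finitely generated versus countable, and the $K_\infty$-minor case — would complete the programme.
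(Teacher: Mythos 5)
The statement you are attempting to prove is \Cnr{conj wqo}, which the paper poses as an \emph{open conjecture}; \Sr{sec wqo} offers supporting evidence and a rough programme but no proof, so there is no proof of the paper's to compare against. Your programme does closely track what the paper sketches: the canonical \td\ machinery (\Tr{thm_EGL}, \Tr{CHM}, \Lr{EGL 3.12}), the reduction of thick-ended torsos to \Prr{prop just two}, the appeal to Nash-Williams' bqo theorem for the assembly step, and the honest acknowledgement that the gluing of torso-wise minor embeddings, the $K_\infty$-minor case, and the non-\lf\ countable case all remain open are consistent with the paper's own assessment of where the difficulties lie.

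There is, however, one step that you present as settled which is not, and which the paper explicitly flags as open. In Step~2 you claim that $2$-ended \qt\ torsos, and the graphs \qi\ to a tree, ``have bounded treewidth \ldots\ and hence are even better-quasi-ordered under minors, by Thomas's theorem that bounded-treewidth graphs are bqo.'' Thomas's theorem requires a \emph{uniform} treewidth bound across the class under consideration. Each individual $2$-ended \Cg\ has finite treewidth, but the class of all of them does not: the paper exhibits $C_n\times R$, the cartesian product of a cycle with the double ray, as a family of $2$-ended \Cg s whose treewidth grows linearly in~$n$ and which forms arbitrarily long strictly decreasing minor chains. The paper's own stance is that one would merely ``expect'' finitely many minor-twin classes of $2$-ended \Cg s for each fixed end degree, and that for the tree-like case one would need to ``use, or generalise'' Nash-Williams -- in other words, both subcases are open. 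So even if your Step~3 gluing were resolved, the well-quasi-ordering of the $2$-ended and tree-like building blocks is a genuine gap, not a corollary of bounded treewidth; Step~2 needs to be reopened for those two classes.
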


Several results are known that make progress towards this. Thomassen proved that every \lf, 1-ended, non-planar, vertex-transitive graph has the infinite clique $K_\infty$ as a minor \cite{Thomassen-Hadwiger}. On the other hand, it is known that every planar 1-ended graph admits an embedding into $\R^2$ without accumulation points of vertices \cite[Lemma 12]{ThomassenRichter}, and that every such graph is a minor of $\mathbb{H}$ \cite{KuhMin}. Combining these results with our  \Cr{full Halin VT} settles the restriction of \Cnr{conj wqo} to 1-ended graphs in a strong way. Call two graphs $G,H$ \defi{minor-twins}, if both $G<H$ and $H<G$ hold, and notice that this is an equivalence relation. The above facts combined yield

\begin{proposition} \label{prop just two}
There are exactly two minor-twin classes of \lf, 1-ended, vertex-transitive graphs.\footnote{Namely, that of $\mathbb{H}$, and that of e.g.\ the cubic grid $\Z^3$.}
\end{proposition}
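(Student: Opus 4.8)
The plan is to split the class of locally finite, 1-ended, vertex-transitive graphs into the planar and the non-planar case, show that in each case $G$ is a minor-twin of a single fixed graph, and then verify that these two graphs (namely $\mathbb{H}$ and $\Z^3$) are \emph{not} minor-twins of each other, so that exactly two classes arise.

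First I would treat the non-planar case. Let $G$ be locally finite, 1-ended, vertex-transitive and non-planar. By Thomassen's theorem \cite{Thomassen-Hadwiger}, $G$ contains $K_\infty$ as a minor. Since $\Z^3$ is countable and locally finite, and $K_\infty$ contains every countable graph as a minor (indeed as a topological minor via the standard fan/comb argument, using that $K_\infty$ has vertices of infinite degree), we get $\Z^3 < K_\infty < G$. For the reverse, $G$ is a locally finite, 1-ended, vertex-transitive graph, hence (being non-planar, but we only need local finiteness here) it is a countable graph of bounded degree $3$-colorably embeddable in $\R^3$ in a locally finite way; more simply, every locally finite connected countable graph is a minor of $\Z^3$ — one embeds a spanning tree of $G$ into $\Z^3$ with room to route the remaining edges along disjoint polygonal paths, using the three dimensions to avoid crossings. (This is where one cites the folklore fact, or the argument that every finite graph embeds in $\R^3$ with straight edges and then takes a limit along an exhaustion via König's lemma.) Hence $G < \Z^3$, so $G$ and $\Z^3$ are minor-twins.

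Next the planar case. Let $G$ be locally finite, 1-ended, vertex-transitive and planar. By \Cr{full Halin VT}, $G$ contains a subdivision of $\mathbb{H}$, hence $\mathbb{H} < G$. For the reverse, by \cite[Lemma 12]{ThomassenRichter} $G$ admits an embedding in $\R^2$ with no vertex accumulation point, and then by \cite{KuhMin} every such graph is a minor of $\mathbb{H}$; thus $G < \mathbb{H}$. So $G$ is a minor-twin of $\mathbb{H}$. Combining the two cases, every locally finite, 1-ended, vertex-transitive graph is a minor-twin of $\mathbb{H}$ or of $\Z^3$, so there are at most two minor-twin classes, and both are nonempty since $\mathbb{H}$ and $\Z^3$ themselves are locally finite, 1-ended and vertex-transitive.

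It remains to check that $\mathbb{H}$ and $\Z^3$ are not minor-twins, i.e.\ that $\Z^3 \not< \mathbb{H}$ (the containment $\mathbb{H} < \Z^3$ does hold by the planar case applied to $G=\mathbb{H}$, or directly). This is the one genuine obstacle. The cleanest way is to invoke a planarity-type obstruction: $\mathbb{H}$ is planar, so every minor of $\mathbb{H}$ is planar, whereas $\Z^3$ is non-planar (it contains $K_5$ or $K_{3,3}$ subdivisions); more robustly, $\Z^3$ contains arbitrarily large grids $\Z^2_{n}$ stacked to produce, say, $K_n$ minors for all $n$ in a way incompatible with any planar graph — but the simplest correct statement is just that a minor of a planar graph is planar and $\Z^3$ is not planar. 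Hence $\Z^3 \not< \mathbb{H}$, the two classes are distinct, and there are exactly two. I would expect the write-up to be short, with the only point needing care being the explicit citations for "every locally finite connected graph is a minor of $\Z^3$" and for $\mathbb{H} < \Z^3$, which can both be obtained from the planar embedding results above combined with the obvious planar subgraph $\mathbb{H}$ of $\Z^3$.
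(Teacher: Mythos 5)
Your proof is correct and follows the same two-way split (planar vs.\ non-planar) and the same key ingredients as the paper: Thomassen's Hadwiger result for the non-planar case, and Richter--Thomassen plus K\"uhn plus Corollary~\ref{full Halin VT} for the planar case, with planarity as the separating invariant. The one place where you take a detour is the reverse containment $G<\Z^3$ in the non-planar case. There you invoke a ``folklore'' claim that every locally finite connected countable graph is a minor of~$\Z^3$, and sketch a spanning-tree-plus-routing argument whose details (control of branch-set sizes for graphs of large growth, the limiting step) are genuinely nontrivial and not something you can just wave at. This is unnecessary: you already used $K_\infty$ as an intermediary to show $\Z^3<K_\infty<G$, and the same intermediary handles the other direction more cleanly. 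Indeed $G<K_\infty$ is immediate for any countable $G$, and $K_\infty<\Z^3$ follows by applying Thomassen's theorem to $\Z^3$ itself (which is locally finite, $1$-ended, vertex-transitive and non-planar). Hence $G<K_\infty<\Z^3$, and every graph in the non-planar case is a minor-twin of $K_\infty$, equivalently of $\Z^3$. This is precisely how the paper's implicit argument runs; replacing your folklore citation by this chain removes the only soft spot in the write-up. Everything else, including the observation that minors of planar graphs are planar while $\Z^3$ is not, is fine.
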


It remains to consider the multi-ended case of \Cnr{conj wqo}. It is easy to come up with examples of arbitrarily long strictly decreasing chains $G_1 > G_2 > \ldots G_k$ of 2-ended \Cg s: let $G_i$ be the cartesian product $C_{k-i+3} \times R$, where $R$ denotes the 2-way infinite path, and $C_n$ denotes the cycle. (Thus $G_i$ is the standard \Cg\ of the abelian product of one finite and one infinite cyclic group, where the order of the former decreases with $i$.)

Recall that the \defi{degree} of an end $\omega$ of a graph \g is the maximum number of disjoint rays of \g that belong to $\omega$. It is well-known that if \g is a 2-ended \Cg, then its two ends have the same degree and that degree is finite; this follows from the proof of \cite[Theorem 7]{Diestel1993}, see also \cite{MirRue2022}. We expect that \fe\ $n\in \N$, there are finitely many, perhaps even uniformly boundedly many,  minor-twin classes of 2-ended \Cg s. 

Esperet, Giocanti \& Legrand-Duchesne \cite{EGL-QuasiTransitiveGraphsAvoidingMinor} recently proved that a \Cg\ with arbitrarily large finite clique minors must have a $K_\infty$ minor, and as a result, every inaccessible \Cg\ has a $K_\infty$ minor. Thus we can restrict our attention accessible graphs in \Cnr{conj wqo}. 

Recall that  every \qt\ \lf\ graph without any thick end is \qi\ to a tree \cite[Theorem 5.5]{KroMolQua}. In this case we can try to use, or generalise, Nash-Williams' theorem that the infinite trees are well-quasi-ordered (in fact better-quasi-ordered) under the (topological) minor relation \cite{NWbqo}. For graphs with a thick end \Tr{full Halin VT general}, combined with Thomassen's aforementioned result, might again be useful.

\subsection{Finer minor notions} \label{sec}

Even if  \Cnr{conj wqo} is true, it seems too weak to have any group-theoretic consequences. In this section we pose stronger versions that might be more consequential.

The idea is to replace the minor relation $<$ by a finer one. An obvious candidate is the \defi{shallow minor} relation $\shm$, defined just like $<$, except that we restrict the branch sets to have uniformly bounded size. 

But since we are discussing groups, notions that are stable under changing the generating set of a \Cg\ are more likely to be fruitful. Therefore, it is natural to consider the coarse embeddability relation $\coem$, or the diverging subdivision  relation $\divm$. One could also consider the diverging embedding relation, i.e.\ existence of a diverging map between two graphs as defined in \Sr{embs}. Note that all these relations are transitive, hence they define quasi-orders on the set of (countable, Cayley) graphs.

\begin{question} \label{Q wqo}
For which of the aforementioned relations is the class of countable Cayley graphs well-quasi-ordered? 
\end{question}

This question is interesting enough when restricted to the  \lf\ 1-ended case. Robert Kropholler (private communication) has found an infinite $\coem$-antichain even within the class of finitely presented \Cg s adapting ideas from \cite{BriBraGap, KroPenCoa}. Thus Question~\ref{Q wqo} seems particularly interesting for the relation $\divm$.

\end{document}